\newcommand\minus{%
  \setbox0=\hbox{-}%
  \vcenter{%
    \hrule width\wd0 height \the\fontdimen8\textfont3%
  }%
}
\newtheorem{theorem}{Theorem}[section]
\newtheorem{corollary}{Corollary}
\newtheorem{lemma}[theorem]{Lemma}
\theoremstyle{definition}
\newtheorem{remark}{Remark}
\newcommand{\unaryminus}{\scalebox{0.75}[1.0]{\( - \)}}
\DeclareMathOperator{\sign}{sign}
\title{Extended Hadamard expansions for the Airy functions}
\author[1]{Jose Luis Alvarez-Perez\thanks{joseluis.alvarez@uah.es}}
\affil[1]{Signal Theory and Communications Dept., Polytechnic
School, \ University of Alcala (Spain).}
\providecommand{\keywords}[1]{\textbf{\textit{Keywords---}} #1}
\providecommand{\href}[2]{#2}
\providecommand{\url}[1]{\texttt{#1}}
\date{}
\begin{document}
\maketitle

\begin{abstract}
There are two main power series for the Airy functions, namely the
Maclaurin and the asymptotic expansions. The former converges for
all finite values of the complex variable, $z$, but it requires a
large number of terms for large values of $|z|$, and the latter is a
Poincar\'{e}-type expansion which is well-suited for such large
values and where optimal truncation is possible. The asymptotic
series of the Airy function shows a classical example of the Stokes
phenomenon where a type of discontinuity occurs for the homonymous
multipliers. A new series expansion is presented here that stems
from the method of steepest descents and can be related to the
Hadamard expansions as presented in~\cite{paris:04,parisb:04}, and
which is convergent for all values of the complex variable. Hadamard
expansions were introduced as an extension of the method of steepest
descents and are defined in terms of a large number of
non-systematic integration path subdivisions. Unlike them, 
the expansions in the present work originate in the splitting of the
steepest descent in a number of segments that is not only finite but
very small, and which are defined on the basis of the location of
the branch points. One of the segments reaches to infinity and this
gives rise to the presence of upper incomplete Gamma functions. This
is one of the most important differences with the Hadamard series as
defined in the aforementioned references, where all the incomplete
Gamma functions are of the lower type. The theoretical interest of
the new series expansion is twofold. First of all, it shows how to
convert an asymptotic series into a convergent one with a finite
splitting of the steepest descent path. Secondly, the inverse of the
phase function that is part of the Laplace-type equation is
Taylor-expanded around branch points to produce Puiseux series when
necessary. In addition to this, the proposed analysis shows again
how the Stokes phenomenon for the Airy function is related to the
transition of the steepest descent paths at $\arg z = \pm 2 \pi/3$
from one to two. In regard to its computational application, these
series expansions require a relatively small number of terms for
each of them to reach a very high precision.

\end{abstract}
\keywords{Airy's integral, Airy functions, asymptotic series,
steepest descents method, incomplete Gamma functions.\newline 2010
Mathematics Subject Classification. Primary: 33C10, 41A60.}

\section{Introduction}
\label{sec-introduction}
The Airy function was first introduced in 1838 for the calculation
of light intensity in a caustic, i.e. a surface where light is
focused after reflection or refraction by a curved interface between
media \cite{thorne:2017,landau:77,vallee:2004}. It is defined for
real values of $x$ by the following integral
\begin{equation}
\mbox{Ai}(x)=\frac{1}{\pi}\int_{0}^{\infty}du\,\cos(x
u+\frac{1}{3}u^{3})=\frac{1}{2\pi}\int_{-\infty}^{\infty}du\,e^{\,i(x
u+1/3 u^{3})}.
\label{eq-ai1}
\end{equation}
Although a number of other equivalent integral expressions that
result from elemental variable changes can be found in
\cite{abramowitz:72} and \cite{vallee:2004}, more insight is gained
if the integration is shown on the complex plane as in Figure
\ref{fig:paths} and written as
\begin{equation}
\mbox{Ai}(z)=\frac{1}{2\pi\,i}\int_{\mathcal{L}_{32}}du\,e^{z u-1/3
u^{3}}
\label{eq-ai2}
\end{equation}
where, additionally, a complex argument is considered, or,
alternatively, by using Cauchy's theorem,
\begin{equation}
\mbox{Ai}(z)=\frac{1}{2\pi\,i}\int_{\mathcal{L}_{31}+\mathcal{L}_{12}}du\,e^{\,z
u-1/3 u^{3}}.
\label{eq-ai3}
\end{equation}
\pgfplotsset{ compat=1.3, compat/path replacement=1.5.1, }
\begin{figure}
\resizebox{9cm}{!}{\begin{tikzpicture}
\pgfdeclarepatternformonly{north east lines wide}%
   {\pgfqpoint{-1pt}{-1pt}}%
   {\pgfqpoint{10pt}{10pt}}%
   {\pgfqpoint{9pt}{9pt}}%
   {
     \pgfsetlinewidth{0.4pt}
     \pgfpathmoveto{\pgfqpoint{0pt}{0pt}}
     \pgfpathlineto{\pgfqpoint{9.1pt}{9.1pt}}
     \pgfusepath{stroke}
    }
\pgfdeclarepatternformonly{north west lines wide}%
   {\pgfqpoint{-1pt}{1pt}}%
   {\pgfqpoint{10pt}{-10pt}}%
   {\pgfqpoint{9pt}{-9pt}}%
   {
     \pgfsetlinewidth{0.4pt}
     \pgfpathmoveto{\pgfqpoint{0pt}{0pt}}
     \pgfpathlineto{\pgfqpoint{9.1pt}{-9.1pt}}
     \pgfusepath{stroke}
    }
\begin{axis}[ticks=none,axis x line=center,axis y line=center,xlabel={$\operatorname{Re}(u)$}, ylabel={$\operatorname{Im}(u)$},
xlabel style={at={(axis description cs:1,0.5)}}, ylabel
style={at={(axis description cs:0.5,1)}}, ymin=-5,ymax=5],
\addplot
[dashed, black, domain=-5:5, samples=201, ] {0.57735*x} node
[pos=0.125, above, sloped, black] {$\theta=-\frac{5\pi}{6}$} node
[pos=0.9, below, sloped, black] {$\theta=\frac{\pi}{6}$};

\addplot [name path=A, black, opacity=0, domain=-5:-4.9, samples=2, ] {-0.57735*x};
\addplot [name path=B, black, opacity=0, domain=-5:0, samples=201, ] {5};
\addplot [pattern = north west lines wide, pattern color = gray!75] fill between [of=A and B];

\addplot [name path=C, black, opacity=0, domain=3:5, samples=201, ] {-0.57735*x};
\addplot [name path=D, black, opacity=0, domain=3:5, samples=201, ] {0.57735*x};
\addplot [pattern = north east lines wide, pattern color = gray!75] fill between [of=C and D];

\addplot [name path=E, black, opacity=0, domain=-5:-4.9, samples=201, ] {0.57735*x};
\addplot [name path=F, black, opacity=0, domain=-5:0, samples=201, ] {-5};
\addplot [pattern = north east lines wide, pattern color = gray!75] fill between [of=E and F];

\addplot [dashed, black, domain=-5:5, samples=201, ] {-0.57735*x} node
[pos=0.125, below, sloped, black] {$\theta=\frac{5\pi}{6}$} node
[pos=0.85, above, sloped, black] {$\theta=-\frac{\pi}{6}$};

\addplot [
black, domain=-5:0, samples=201, postaction={decorate,
decoration={markings,
 mark=at position 0.9 with {\arrow[scale=1.5]{<};}}}
] {sqrt(3*(x^2-1))};
\addplot [
black, domain=-5:0, samples=201, ] {-sqrt(3*(x^2-1))};
\addplot [
black, domain=0.25:5, samples=201, postaction={decorate,
decoration={markings,
 mark=at position 0.15 with {\arrow[scale=1.5]{>};}}}]
({-(((x-1)*sqrt(2+x))/(sqrt(3)*sqrt(x)))},{x});
\addplot [
black, domain=1.3:5, samples=201, ] ({x},{0.25*exp(-(x-1.3)/4)});
\addplot [
black, domain=0.25:5, samples=201, postaction={decorate,
decoration={markings,
 mark=at position 0.15 with {\arrow[scale=1.5]{<};}}}]
({-(((x-1)*sqrt(2+x))/(sqrt(3)*sqrt(x)))},{-x}); \addplot [ black,
domain=1.3:5, samples=201, ] ({x},{-0.25*exp(-(x-1.3)/4)}); \node at
(axis cs:1.5,1.75) [anchor=north east] {$\mathcal{L}_{12}$}; \node
at (axis cs:1.5,-0.75) [anchor=north east] {$\mathcal{L}_{31}$};
\node at (axis cs:-2,3.5) [anchor=north east] {$\mathcal{L}_{32}$};
\node at (axis cs:3,4.5) [anchor=north east] {$\mathds{C}$};
\node[gray!95] at (axis cs:4.75,-0.25) [anchor=north east] {1};
\node[gray!95] at (axis cs:-3.5,4.5) [anchor=north east] {2};
\node[gray!95] at (axis cs:-3.5,-3.5) [anchor=north east] {3};
\end{axis}
\end{tikzpicture}}
\caption{Integration paths in \eqref{eq-ai2} and \eqref{eq-ai3}. The
path $\mathcal{L}_{21}$ in equation \eqref{eq-bi1} is the same as
$\mathcal{L}_{12}$ but reversed.}
\label{fig:paths}
\end{figure}

Integration paths $\mathcal{L}_{m,n}\;(m,n=1,2,3;m\neq n)$ in Figure
\ref{fig:paths} are subject to the condition of
$-\pi/6+2(k-1)\pi/3<\mbox{Arg } z<\pi/6+2(k-1)\pi/3$ in each zone
$k=1,2,3$ as $|u|\rightarrow\infty$, where $\mbox{Arg } z$ refers to
the argument of $z$ \footnote{For the principal value of the
argument of $z$ we will use $\mbox{arg }z$.}. Integration in the
complex plane facilitates the analytic continuation of the Airy
function as given in Equation \eqref{eq-ai1}, which produces the
corresponding entire function $\mbox{Ai}(z),z\in \mathds{C}$.
Furthermore, the Airy function is a solution of the differential
equation
\begin{equation}
d^{2}y/dz^{2}=z\,y
\label{eq-diffairy}
\end{equation}
as can be seen by direct substitution of~\eqref{eq-ai2} or
\eqref{eq-ai3} as a function of $z$ in \eqref{eq-diffairy}, or else
by solving the latter with
Laplace's method \cite{copson:65}. 
Airy equation is a second-order, ordinary differential equation, the
general solution of which is a linear combination of two linearly
independent solutions that form its functional basis at all points
in $\mathds{C}$. A second linearly independent solution to
\eqref{eq-diffairy}, known as the Airy function of the second kind,
$\mbox{Bi}(z)$, is chosen as
\begin{equation}
\mbox{Bi}(z)=\frac{1}{2\pi\,i}\int_{\mathcal{L}_{31}+\mathcal{L}_{21}}du\,e^{\,z
u-1/3 u^{3}}.
\label{eq-bi1}
\end{equation}

Any modified Bessel function of order $\pm 1/3$ , $Z_{\pm 1/3}(x)$,
produces a solution of the Airy equation of the type
$\sqrt{x}\,Z_{\pm 1/3}(\frac{2}{3}x^{3/2})$ if $x>0$. Likewise,
solutions to the classical Bessel differential equation of the same
order, $R_{\pm 1/3}(x)$, produce Airy equation solutions given by
$\sqrt{-x}\,R_{\pm 1/3}\left(\frac{2}{3}(-x)^{3/2}\right)$ if $x<0$.
In particular, $\mbox{Ai}(x)$ can be written in terms of Bessel
functions of order $\pm 1/3$ in $\xi=\frac{2}{3}x^{3/2}$ as
in~\cite{vallee:2004}
\begin{subequations}
\begin{align}
\mbox{Ai}(x)&=\sqrt{\frac{x}{3}}\left[I_{\unaryminus 1/3}(\xi)
-I_{1/3}(\xi)\right]=\frac{1}{\pi}\sqrt{\frac{x}{3}}\,K_{1/3}(\xi)
\label{eq-bess1}
\\
\mbox{Ai}(\unaryminus x)&=\sqrt{\frac{x}{3}}\left[J_{\unaryminus
1/3}(\xi)
+J_{1/3}(\xi)\right]=\sqrt{\frac{x}{3}}\,\operatorname{Re}\left\{e^{i\,
\pi/6}\,H_{1/3}(\xi)\right\}
\label{eq-bess2}
\end{align}
\end{subequations}
for $x>0$. Analytic continuation guarantees that these identities
hold for a complex variable, namely $z=x+i\,y$ in \eqref{eq-bess1}
and $z=-x+i\,y$ with $x>0$ in \eqref{eq-bess2}.

The role of Airy functions is prominent in the construction of
uniform asymptotic expansions for contour integrals in the complex
plane with coalescing saddle points \cite{felsen:94,wong:2001}, 
as well as in solutions of linear second-order ordinary differential
equations with a simple turning point \cite{olver:97}. 
A large number of applications have been developed in physics:
almost in every instance for which wave equations with turning
points are relevant \cite{vallee:2004}.

\section{Maclaurin and asymptotic power expansion of the Airy functions}

The expansions of $\mbox{Ai}(z)$ and $\mbox{Bi}(z)$ near the origin
are given by \cite{copson:65}~\footnote{\label{fn-aibi}Equation
\eqref{eq-taylorseriesbi} is not given in \cite{copson:65} or
\cite{vallee:2004} but results of applying
\begin{equation}
\mbox{Bi}(z)=e^{i\,\pi/6}\mbox{Ai}(z\,e^{i\,2\pi/3})+e^{-i\,\pi/6}\mbox{Ai}(z\,e^{-i\,2\pi/3})
\label{eq-biai}
\end{equation}
to \eqref{eq-taylorseriesai}.}
\begin{subequations}
\begin{align}
\mbox{Ai}(z)&=\frac{1}{3^{2/3}\pi}\sum_{n=0}^{\infty}\frac{\Gamma(\frac{n+1}{3})}{n!}
\sin\left\{\frac{2}{3}(n+1)\pi\right\}\,\left(3^{1/3}z\right)^{n}
\label{eq-taylorseriesai}
\\
\mbox{Bi}(z)&=\frac{2}{3^{2/3}\pi}\sum_{n=0}^{\infty}\frac{\Gamma(\frac{n+1}{3})}{n!}
\sin^{2}\left\{\frac{2}{3}(n+1)\pi\right\}\,\left(3^{1/3}z\right)^{n}.
\label{eq-taylorseriesbi}
\end{align}
\end{subequations}

These Maclaurin series converge for all finite values of $z$ but can
be oscillating and slowly convergent. For large $|z|$, asymptotic
expansions of $\mbox{Ai}(z)$ and $\mbox{Bi}(z)$ are preferred
instead \cite{erdelyi:56,copson:65,olver:2010,felsen:94,wong:2001}.
Asymptotic expansions are not convergent but, for a fixed number of
terms, approach the exact function as the variable approaches some
distinguished value that defines the asymptotic limit. In
particular, $\mbox{Ai}(z)$ can be approximated by the following
asymptotic power expansions \cite{erdelyi:56,copson:65,felsen:94}
for large variable values
\begin{subequations}
\begin{align}
\mbox{Ai}(z)&\sim F(z) ,\quad |z|\rightarrow \infty\quad (|\mbox{Arg
} z|<\pi)
\label{eq-aiae1}
\\
\mbox{Ai}(z)&\sim F(z)+e^{i\pi/2\,{\scriptsize\mbox{sign}\{\mbox{Arg
} z}\}}\,G(z),\quad |z|\rightarrow \infty\quad
(\frac{\pi}{3}<|\mbox{Arg } z|<\frac{5\pi}{3})
\label{eq-aiae2}
\\
F(z)&=\frac{1}{2\pi\,z^{1/4}}\,e^{-2/3\,z^{3/2}}\sum_{n=0}^{\infty}(-1)^{n}
\frac{\Gamma(3n+\frac{1}{2})}{3^{2n}(2n)!}z^{-3/2 n}
\label{eq-f}
\\
G(z)&=\frac{1}{2\pi\,z^{1/4}}\,e^{2/3\,z^{3/2}}\sum_{n=0}^{\infty}
\frac{\Gamma(3n+\frac{1}{2})}{3^{2n}(2n)!}z^{-3/2 n}.
\label{eq-g}
\end{align}
\label{eq-subeq1}
\end{subequations}
Equation \eqref{eq-aiae1} is often quoted as applicable to
$|\mbox{Arg } z|<\pi/3$ \cite{erdelyi:56,felsen:94} only. The series
shown in \eqref{eq-aiae1} is obtained by using Watson's lemma,
whereas the restriction given by $|\mbox{Arg } z|<\pi/3$ results
from the steepest descents method. The former is obviously more
general and encompasses the latter. Stokes phenomenon is present in
the asymptotic expressions above: the overlapping of the regions in
\eqref{eq-subeq1} may seem to be inconsistent. The reason for such
overlapping is twofold, on the one hand there is a branch cut
implicit in $z^{3/2}$ at $\mbox{Arg } z=\pi$ in $\eqref{eq-aiae1}$,
whereas the branch cut is placed at $\mbox{Arg } z=0$ for
$\eqref{eq-aiae2}$~\footnote{It can be generally placed at any angle
in the range $(-\pi/3,\pi/3)$}, and, on the other hand, $G(z)$ is
subdominant to the asymptotic expansion in \eqref{eq-aiae1} for
$|\mbox{Arg } z| \in (\pi/3,\pi)$ for the corresponding Riemann
surface. For the case of $\mbox{Arg } z=\pm\pi$, which is the
anti-Stokes line of the turning point of \eqref{eq-diffairy}, $z=0$,
we can write, for $x\in\mathds{R}^{+}$~\footnote{$\mbox{Ai}(-z)$
admits an analog expression for $|z|\rightarrow \infty,\;|\mbox{Arg
} z|<2/3\pi$ \cite{copson:65}. However, the dominant exponential in
the trigonometric functions produce the expansion given in
\eqref{eq-aiae1}.},
\begin{subequations}
\begin{align}
\mbox{Ai}(-x)&\sim P(x)
\sin\left(\frac{2}{3}x^{3/2}+\frac{\pi}{4}\right)-Q(x)
\cos\left(\frac{2}{3}x^{3/2}+\frac{\pi}{4}\right)
\label{eq-aiae3}
\\
P(x)&=\frac{1}{\pi\,z^{1/4}}\sum_{n=0}^{\infty}(-1)^{n}
\frac{\Gamma(6n+\frac{1}{2})}{3^{4n}(4n)!}x^{-3 n}
\label{eq-p}
\\
Q(x)&=\frac{1}{\pi\,z^{7/4}}\sum_{n=0}^{\infty}(-1)^{n}
\frac{\Gamma(6n+\frac{7}{2})}{3^{4n+2}(4n+2)!}x^{-3 n}.
\label{eq-q}
\end{align}
\label{eq-subeq2}
\end{subequations}
Similar expressions are found for $\mbox{Bi}(z)$
\cite{copson:65}~\footnote{They can be obtained by using the
identity in previous footnote \ref{fn-aibi}.
},
\begin{subequations}
\begin{align}
\mbox{Bi}(z)&\sim 2 G(z),\quad |z|\rightarrow \infty\quad
(|\mbox{Arg } z|<\frac{\pi}{3})
\label{eq-biae3}
\\
\mbox{Bi}(z)&\sim 2
G(z)+e^{i\pi/2\,{\scriptsize\mbox{sign}\{\mbox{Arg }
z}\}}\,F(z),\quad |z|\rightarrow \infty\quad
(\frac{\pi}{3}<|\mbox{Arg } z|\leq\pi)
\label{eq-biae4}
\\
\mbox{Bi}(-x)&\sim P(x)
\cos\left(\frac{2}{3}x^{3/2}+\frac{\pi}{4}\right)+Q(x)
\sin\left(\frac{2}{3}x^{3/2}+\frac{\pi}{4}\right),\quad
x\in\mathds{R}^{+}.
\label{eq-biae5}
\end{align}
\label{eq-subeq3}
\end{subequations}
In \eqref{eq-subeq2} and \eqref{eq-biae5} the oscillatory nature of
the Airy functions for real negative values of the variable is
revealed.

The asymptotic series must be truncated at some term to produce an
acceptable value and its error is bound in magnitude by the first
neglected term in absolute value multiplied by a certain factor
\cite{olver:2010}. There are also some exponentially-improved
asymptotic series obtained by further expansion of the remainder
term \cite{olver:91}. To overcome the inherent difficulties of a
divergent series, techniques such as the Borel summation can be
applied to produce a hyperasymptotic series \cite{berry:90,boyd:99}.
Notwithstanding the well spread acceptance of these schemes and
their extensions, a convergent series expansions alternative to
\eqref{eq-taylorseriesai} and \eqref{eq-taylorseriesbi} is proposed
in the present work. 
In section \ref{sec-reviewreal}, the method of steepest descents for
a real variable is reviewed in the manner needed in section
\ref{sec-convseries} for producing the new series expansion. Such a
new expansion is valid for complex variables and is written in terms
of incomplete Gamma functions, as in the method of Hadamard series
for steepest descents described in~\cite{paris:04,parisb:04}. Here,
the integration path is divided in five sections in which the
corresponding series present uniform convergence. Unlike
in~\cite{paris:04,parisb:04}, the expansion in two of them is done
around the point of the path at infinity. Another difference is the
use of a Puiseux series for two of the expansions.

\section{Review of the asymptotic expansion of $\mbox{Ai}(x)$ of a real and positive variable by the method of steepest
descents}
\label{sec-reviewreal}

For the purpose of the demonstration of Theorem \ref{th-airyseries},
which is the main result of this work, we will use the already known
method that produces the customary series given in \eqref{eq-subeq1}
for $x\in\mathds{R}^{+}$ by changing the integration variable in
\eqref{eq-ai1} through $\alpha=i\,x^{-1/2} u$, 
\begin{equation}
\mbox{Ai}(x)=\frac{x^{1/2}}{2\pi i}\int_{-i \infty}^{i
\infty}d\alpha\,e^{x^{3/2}( \alpha-1/3 \alpha^{3})}.
\label{eq-ai4}
\end{equation}
The analytic continuation of $\mbox{Ai}(x)$ to the complex domain is
not treated in this section but will be included in the next one for
the new series expansion. Equation \eqref{eq-ai4} has the form
\begin{align}
I(\sigma)&=\int_{\mathcal{C}} d\alpha\,F(\alpha)\,e^{\sigma f(\alpha)}\\
\sigma&=x^{3/2}\in\mathds{R}^{+}
\label{eq-spint}
\end{align}
and can be solved by applying Debye's method of steepest descents
\cite{copson:65}. For that purpose the argument of the exponential
can be written in terms of the real and imaginary components of
$\alpha$ as
\begin{align}
f(\alpha)&=f_{r}(\alpha)+i
f_{i}(\alpha)=\alpha-\frac{\alpha^{3}}{3}\nonumber
\\
&f_{r}(\alpha)=\alpha_{r}-\frac{\alpha_{r}^{3}}{3}+\alpha_{r}
\alpha_{i}^{2}\nonumber
\\
&f_{i}(\alpha)=\alpha_{i}+\frac{\alpha_{i}^{3}}{3}-\alpha_{r}^{2}
\alpha_{i}.
\end{align}
Along the steepest descent path passing through the critical point
$\alpha_{s}=-1$~\footnote{The equivalent saddle point for complex
values of the variable of the Airy functions will be used in the
next section, as far as the new developments presented here are
concerned.} and determined by $f_{i}(\alpha)=f_{i}(\alpha_{s})$,
that is, by
\begin{equation}
1+\frac{\alpha_{i}^{2}}{3}-\alpha_{r}^{2}=0
\label{fig-stdsp1}
\end{equation}
we can write
\begin{equation}
f(\alpha)=f(\alpha_{s})-s^{2},\quad s\in\mathds{R}
\label{eq-taylors}
\end{equation}
which corresponds to
\begin{align}
f_{r}(\alpha)&=f_{r}(\alpha_{s})-s^{2}\nonumber
\\
f_{i}(\alpha)&=f_{i}(\alpha_{s})=0.
\label{eq-fsalpha}
\end{align}
The change of variable to $s$ -given in \eqref{eq-taylors} as an
implicit function- results in
\begin{equation}
I(\sigma)=\exp[\sigma\,f(\alpha_{s})]\int_{-\infty}^{\infty}\Phi(s)\exp[-\sigma\,s^{2}]\,ds
\label{eq-taylorsb}
\end{equation}
where
\begin{equation}
\Phi(s)=\frac{d\alpha}{ds}\,F[\alpha(s)].
\label{eq-phis}
\end{equation}
For the Airy's integral, $F[\alpha(s)]=1$ and
\begin{equation}
\Phi(s)=\frac{d\alpha}{ds}.
\label{eq-phisai}
\end{equation}
The function $\alpha(s)$ can be written through its Maclaurin series
with the help of the Lagrange's inversion theorem
\cite{whittakerandwatson:27}, that produces
\begin{equation}
\alpha+1=\sum_{n=1}^{\infty}\frac{i^{n}}{n!}\frac{\Gamma\left(\frac{3
n}{2}-1\right)} {\Gamma\left(\frac{n}{2}\right)}\frac{1}{3^{n-1}}
s^{n}
\label{eq-tayloralpha}
\end{equation}
where the radius of convergence is given by
$|s|\leq\rho=\frac{2}{\sqrt{3}}$. Uniform convergence in this same
region, $|s|\leq\rho$, follows from the application of Weierstrass'
M-test to \eqref{eq-tayloralpha} and the convergence of the upper
bounding series
\begin{equation}
\sum_{n=1}^{\infty}\frac{\Gamma\left(\frac{3}{2}n-1\right)}
{n!\,\Gamma\left(\frac{n}{2}\right)}\frac{1}{3^{n-1}}
\left(\frac{2}{\sqrt{3}}\right)^{n}=2.
\label{eq-mtest}
\end{equation}

Equation \eqref{eq-taylors} is a complex algebraic curve that
results in a single-valued function $\alpha=\alpha(s)$ on a Riemann
surface of three sheets and four branch points, namely $s=0,\pm 2
i/\sqrt{3}, \infty$. It is usually not mentioned that the series in
\eqref{eq-tayloralpha} expands around a branch point. This issue is
discussed in the next section. The result of using
\eqref{eq-tayloralpha} in \eqref{eq-phisai} and then in
\eqref{eq-taylorsb} beyond such radius of convergence produces the
following asymptotic approximation of $I$ (cf. \eqref{eq-subeq1}),
\begin{equation}
I(\sigma)\sim \exp[-2/3
\sigma]\sum_{n=0}^{\infty}\frac{\Phi^{(2n)}(0)}{(2n)!}
\frac{\Gamma(n+1/2)}{\sigma^{n+1/2}}
\label{eq-series}
\end{equation}
where only even derivatives of $\Phi$,
\begin{equation}
\Phi^{(2n)}(0)=i^{2n+1}\frac{\Gamma\left(3 n+\frac{1}{2}\right)}
{\Gamma\left(n+\frac{1}{2}\right)}\frac{1}{3^{2n}}
\label{eq-phi0}
\end{equation}
are present due to
\begin{align}
\int_{-\infty}^{\infty}s^{2n+1}\exp[-\sigma\,s^{2}]\,ds&= 0\nonumber
\\
\int_{-\infty}^{\infty}s^{2n}\exp[-\sigma\,s^{2}]\,ds&=
\frac{\Gamma(n+1/2)}{\sigma^{n+1/2}}
\label{eq-gammaintegrals2}
\end{align}
with $n\in\mathds{N}$ and $\sigma\in\mathds{R}^{+}$.

The lack of convergence of \eqref{eq-tayloralpha} in the domain of
integration of the Airy's integral leads to the lack of convergence
in \eqref{eq-series}. This use of the series beyond its convergence
limits is a technique often used in asymptotics, as long as the
series can be truncated with a known error bound. The method of
steepest descents is not the only manner to obtain \eqref{eq-series}
with \eqref{eq-phi0}: Watson's lemma \cite{copson:65} can be applied
on the first integral expression for the Airy function in
\eqref{eq-ai1} with the Maclaurin series for the cosine\footnote{The
cosine series is convergent, unlike \eqref{eq-tayloralpha}, for the
whole integration domain. However, it is not uniformly convergent
and therefore, if integrated term by term, does not produce a
convergent series.}. However, the method of steepest descents is the
starting point for the development presented in the following
section.

\section{A convergent series expansion for $\mbox{Ai}(z)$ of complex variable by the method of steepest
descents} \label{sec-convseries}

For the case of complex arguments in \eqref{eq-ai2}, the change of
variable performed in the previous section is replaced by
$\alpha=i\,|z|^{-1/2}u$ where $z$ is the complex variable of the
Airy function. Thus,
\begin{align}
\mbox{Ai}(z)&=\frac{|z|^{1/2}}{2\pi i}\int_{-i \infty}^{i
\infty}d\alpha\,e^{|z|^{3/2}(
w\alpha-1/3 \alpha^{3})}\nonumber\\
w&=e^{i\,\varphi}\nonumber\\
\varphi&=\mbox{arg }z
\label{eq-aiz1}
\end{align}
where $\mbox{arg }z$ is the principal value of the argument of $z$.
The integration path can be deformed to a path of the type
${\mathcal{L}_{32}}$ or $\mathcal{L}_{31}+\mathcal{L}_{12}$ in
Figure \ref{fig:paths}, or any equivalent homotopic curve.

Aiming at the integration along the steepest descent path, we now
compute the saddle points of the argument of the exponential
function in \eqref{eq-aiz1} as a function of $\alpha$,
\begin{align}
f(w,\alpha)&=f_{r}(w,\alpha)+i
f_{i}(w,\alpha)=w\alpha-\frac{\alpha^{3}}{3}\nonumber
\\
&f_{r}(w,\alpha)=w_{r}\alpha_{r}-w_{i}\alpha_{i}-\frac{\alpha_{r}^{3}}{3}+\alpha_{r}
\alpha_{i}^{2}\nonumber
\\
&f_{i}(w,\alpha)=w_{r}\alpha_{i}+w_{i}\alpha_{r}+\frac{\alpha_{i}^{3}}{3}-\alpha_{r}^{2}
\alpha_{i}
\label{eq-taylorscomplex0}
\end{align}
where $w=w_{r}+i w_{i}$ and $\alpha=\alpha_{r}+i \alpha_{i}$. These
saddle points are given by $\alpha_{s}=\pm w^{1/2}$. The steepest
descent path is homotopic to $\mathcal{L}_{32}$ and crosses
$\alpha_{s}=- w^{1/2}$. By setting
\begin{equation}
f(w,\alpha)=f(w,\alpha_{s}=- w^{1/2})-s^{2}=\unaryminus
\frac{2}{3}w^{3/2}-s^{2},\quad s\in\mathds{R}
\label{eq-taylorscomplex}
\end{equation}
so that
\begin{align}
f_{r}(w,\alpha)&=f_{r}(w,\alpha_{s})-s^{2}=-\frac{2}{3}\cos\frac{3}{2}\varphi-s^{2}\nonumber
\\
f_{i}(w,\alpha)&=f_{i}(w,\alpha_{s})=-\frac{2}{3}\sin\frac{3}{2}\varphi
\label{eq-fsalphacomplex}
\end{align}
we can define the steepest descent path in terms of the parameter
$s$.

Different steepest paths are shown in Figure \ref{fig:intpaths} for
different values of $\mbox{arg }z$. The presence of branch points in
the algebraic curve given by \eqref{eq-taylorscomplex} precludes the
analyticity of $\alpha=\alpha(s)$ in the whole path. 
The method developed by R. B. Paris in
\cite{paris:04,parisb:04,paris:11} consists in a non-systematic
splitting of the integration wherein a large number of segments is
produced to achieve analytic continuation. This results in a sum of
series containing lower incomplete Gamma functions that Paris names
Hadamard expansions. The same idea of dividing the steepest descent
path in a set of segments is used in what follows, but with a
criterion of bordering the branch points at a certain distance so
that analytic continuation is optimally performed along such an
integration path. In one case a branch point is found to lie on the
steepest descent path or very close to it and a Puiseux series is
used\footnote{In fact, $s=0$ is also a branch point of the algebraic
curve defined by \eqref{eq-taylorscomplex0} and
\eqref{eq-taylorscomplex}, as explained later in
footnote~\footref{refnote}.}. In addition to this, expansions around
infinity are carried out so that a infinite number of segments as in
previously studied Hadamard expansions is avoided. The positions of
the branch points is indicated in the following remark.

\begin{figure}
\resizebox{12cm}{!}{
\begin{tikzpicture}[scale=2]
    \def\offset{0.2}
    \def\offsetb{-4.0}
    \def\offsetv{-2.0}
    \def\offsetvb{0.16}
    \def\offsetvc{-1.4}
    \def\labeloffset{-0.2}
    \coordinate (A1) at (0+\offset,0);
    \coordinate (A1b) at (-0.5+\offset+\labeloffset,0);
    \coordinate (B1) at (1.8+\offset,0);
    \coordinate (C1) at (1.9+\offset,0.35);
    \coordinate (D1) at (2.35+\offset,0.35);
    \coordinate (E1) at (2.45+\offset,0.7);
    \coordinate (F1) at (3.8+\offset,0.7);
    \coordinate (A2) at (0+\offset,0.35);
    \coordinate (A2b) at (-0.5+\offset+\labeloffset,0.35);
    \coordinate (B2) at (0.45+\offset,0.35);
    \coordinate (C2) at (0.55+\offset,0.7);
    \coordinate (D2) at (2.35+\offset,0.7);
    \coordinate (E2) at (2.45+\offset,0.35);
    \coordinate (F2) at (3.8+\offset,0.35);
    \coordinate (A3) at (0+\offset,0.7);
    \coordinate (A3a) at (0+\offset+\labeloffset,1.1);
    \coordinate (A3b) at (-0.5+\offset+\labeloffset,0.7);
    \coordinate (A3c) at (0+\offset+\labeloffset,0.9+\offsetv);
    \coordinate (B3) at (0.45+\offset,0.7);
    \coordinate (C3) at (0.55+\offset,0.35);
    \coordinate (D3) at (1.8+\offset,0.35);
    \coordinate (E3) at (1.9+\offset,0);
    \coordinate (F3) at (3.8+\offset,0);
    \coordinate (G1inf) at (3.8+\offset,0);
    \coordinate (G2inf) at (3.8175+\offset,0.35);
    \coordinate (G3inf) at (3.8+\offset,0.7);
    \coordinate (G13inf) at (3.9+\offset,0.7);
    \coordinate (G31inf) at (3.9+\offset,0);
    \coordinate (G22inf) at (3.91+\offset,0.35);
    \coordinate (H1) at (4.2+\offset,0);
    \coordinate (H2) at (4.2+\offset,0.35);
    \coordinate (H3) at (4.2+\offset,0.7);
    \coordinate (C3z) at (4.3+\offsetb,0.6+\offsetv);
    \coordinate (D3z) at (5.25+\offsetb,0.6+\offsetv);
    \coordinate (E3z) at (5.5+\offsetb,-0.2+\offsetv);
    \coordinate (F3z) at (6.5+\offsetb,-0.2+\offsetv);
    \coordinate (A1z) at (4.3+\offsetb,-0.2+\offsetv);
    \coordinate (A1bz) at (4.95+\offsetb,0+\offsetv);
    \coordinate (A1cz) at (4.95+\offsetb,-0.4+\offsetv);
    \coordinate (A1dz) at (4.45+\offsetb,-0.1+\offsetv);
    \coordinate (A1ez) at (4.45+\offsetb,-0.3+\offsetv);
    \coordinate (B1z) at (5.25+\offsetb,-0.2+\offsetv);
    \coordinate (C1z) at (5.5+\offsetb,0.6+\offsetv);
    \coordinate (C1bz) at (5.8+\offsetb,0.8+\offsetv);
    \coordinate (C1cz) at (5.8+\offsetb,0.4+\offsetv);
    \coordinate (D1z) at (6.5+\offsetb,0.6+\offsetv);
    \coordinate (D1bz) at (6.3+\offsetb,0.7+\offsetv);
    \coordinate (D1cz) at (6.3+\offsetb,0.5+\offsetv);
    \coordinate (SNP1) at (4.65+\offsetb,-0.12+\offsetv);
    \coordinate (SNP2) at (5.2+\offsetb,-0.12+\offsetv);
    \coordinate (SNP3) at (5.3+\offsetb,0.2+\offsetv);
    \coordinate (SNN1) at (5.365+\offsetb,0.36+\offsetv);
    \coordinate (SNN2) at (5.45+\offsetb,0.68+\offsetv);
    \coordinate (SNN3) at (6.15+\offsetb,0.68+\offsetv);
    \coordinate (SPN1) at (4.65+\offsetb,-0.12+\offsetv-\offsetvb);
    \coordinate (SPN2) at (5.3+\offsetb,-0.12+\offsetv-\offsetvb);
    \coordinate (SPN3) at (5.38+\offsetb,0.125+\offsetv-\offsetvb);
    \coordinate (SPP1) at (5.445+\offsetb,0.36+\offsetv-\offsetvb);
    \coordinate (SPP2) at (5.53+\offsetb,0.68+\offsetv-\offsetvb);
    \coordinate (SPP3) at (6.15+\offsetb,0.68+\offsetv-\offsetvb);
    \draw (A1) -- (B1) -- (C1) -- (D1) -- (E1) -- (F1);
    \node at (A3a) {a$)$};
    \node at (A3b) {Sheet 1 $(\rightarrow \alpha_{1})$};
    \node at (A3c) {b$)$};
    \draw (A2) -- (B2) -- (C2) -- (D2) -- (E2) -- (F2);
    \node at (A2b) {Sheet 2 $(\rightarrow \alpha_{2})$};
    \draw (A3) -- (B3) -- (C3) -- (D3) -- (E3) -- (F3);
    \node at (A1b) {Sheet 3 $(\rightarrow \alpha_{3})$};
    \draw[dotted,very thick] (G1inf) -- (G13inf);
    \draw[dotted,thick] (G2inf) -- (G22inf);
    \draw[dotted,very thick] (G3inf) -- (G31inf);
    \draw (F1) -- (H3);
    \draw (F3) -- (H1);
    \draw (F2) -- (H2);
    \draw[densely dashed] (0+\offset,-0.35) -- (4.2+\offset,-0.35);
    \draw (0.47+\offset,-0.3) -- (0.47+\offset,-0.4);
    \node at (0.45+\offset,-0.55) {$s=-\frac{2 i w^{3/4}}{\sqrt{3}}$};
    \draw (1.87+\offset,-0.3) -- (1.87+\offset,-0.4);
    \node at (1.82+\offset,-0.55) {$s=0$};
    \draw (2.4+\offset,-0.3) -- (2.4+\offset,-0.4);
    \node at (2.6+\offset,-0.55) {$s=\frac{2 i w^{3/4}}{\sqrt{3}}$};
    \draw (3.85+\offset,-0.3) -- (3.85+\offset,-0.4);
    \node at (3.85+\offset,-0.55) {$s=\infty$};
    \draw[dashdotted,thick] (1.85+\offset,0.175) ellipse (16pt and 8pt);
    \draw[dashdotted,->,thick] (1.8+\offset,-0.1) -- (1.0+\offset,0.275+\offsetvc);
    \draw (C3z) -- (D3z) -- (E3z) -- (F3z);
    \draw (A1z) -- (B1z) -- (C1z) -- (D1z);
    \draw[<-,dashed,thick] (SNP1) -- (SNP2) -- (SNP3);
    \draw[<-,densely dotted,very thick] (SPN1) -- (SPN2) -- (SPN3);
    \draw[->,dashed,thick] (SNN1) -- (SNN2) -- (SNN3);
    \draw[->,densely dotted,very thick] (SPP1) -- (SPP2) -- (SPP3);
    \node at (A1bz) {$s>0$};
    \node at (A1cz) {$s<0$};
    \node at (C1bz) {$s<0$};
    \node at (C1cz) {$s>0$};
    \node at (D1bz) {$-i$};
    \node at (D1cz) {$+i$};
    \node at (A1dz) {$-i$};
    \node at (A1ez) {$+i$};
    \filldraw [gray] (0.5+\offset,0.525) circle (0.5pt);
    \filldraw [gray] (1.85+\offset,0.175) circle (0.5pt);
    \filldraw [gray] (2.4+\offset,0.525) circle (0.5pt);

\end{tikzpicture}}
\caption{Sheets of the Riemann surface for the solutions in Remark
\ref{rem1}. a) The three-sheeted Riemann surface has three finite
branch points and the branch point at infinity. b) Detail of the
branch point at $s=0$. The series expansion in Lemma \ref{lema1}
runs from negative $s$ values on sheet 3, which corresponds to
images in the branch given by $\alpha_{3}(s)$, to positive values on
sheet 2, which has its image values in the branch given by
$\alpha_{2}(s)$, as indicated by the dotted line. If the minus sign
were selected in equation \eqref{eq-gn2}, then the sheets would be
traversed in the opposite direction (dashed line). The arrows
indicate the fact that the branches are expanded \textit{around} the
central point $s=0$ and not the eventual direction of the
integration through a path, which we will take from $s=-\infty$ to
$s=\infty$.}
\label{fig:sheets1}
\end{figure}

\begin{remark}
{\itshape The complex algebraic curve defined by
\eqref{eq-taylorscomplex} has three branch points in the finite
domain of the Riemann surface, whose positions can be dealt with to
provide analytic continuation of the multi-valued function
$\alpha=\alpha(s)$. These branch points are readily computed by
seeking the roots of the corresponding discriminant and are found at
$s=0,\pm \frac{2 i w^{3/2}}{\sqrt{3}}$. The branch point at $s=0$ is
of order two but the other ones are simple branch points. In
addition to them, the infinity is a branch point of order four.

As a straightforward consequence of Cardano's formulas for the
reduced cubic, the three branches correspond to the following
solutions
\begin{align}
\alpha_{1}(s)&=\frac{\xi(s,w)}{2^{1/3}}
+\frac{2^{1/3}\,w}{\xi(s,w)}\nonumber
\\
\alpha_{2}(s)&=\frac{e^{i 2 \pi/3}}
{2^{1/3}}\,\xi(s,w)+\frac{2^{1/3}\,\,e^{\unaryminus i 2
\pi/3}\,w}{\xi(s,w)} \nonumber
\\
\alpha_{3}(s)&=\frac{e^{\unaryminus i 2 \pi/3}}
{2^{1/3}}\,\xi(s,w)+\frac{2^{1/3}\,\,e^{i 2
\pi/3}\,w}{\xi(s,w)}\nonumber
\\
&\xi(s,w)=[3\, s^{2}+2\,w^{3/2}+\sqrt{3}\, \mu(s,w)]^{1/3}\nonumber
\\
&\mu(s,w)= \sqrt{3\,s^{4}+4\,w^{3/2}\,s^{2}}.
\label{eq-alphahatsa}
\end{align}
For real values of $s$, functions $\alpha_{2}(s)$ and
$\alpha_{3}(s)$ define the steepest-descent path through
$\alpha_{s}=-w^{1/2}$. The three sheets of the single-valued
description of the algebraic curve \eqref{eq-taylorscomplex} are
depicted in Figure \ref{fig:sheets1}a. 

Considering these branch points and the convergence disks that they
allow, the steepest descent path will be split in five segments,
defined by the points around which each series expansion is
computed: $s=0, \pm \frac{2}{\sqrt{3}}, \infty$.}
\label{rem1}
\end{remark}

An extension of \eqref{eq-tayloralpha} for $\alpha_{s}=-w^{1/2}$ is
provided by the following Lemma, which will apply to the computation
of the central section of the steepest descent path crossing the
saddle point at $\alpha_{s}$.
\begin{lemma}
The solution of $\alpha=\alpha(s)$ around $\alpha_{s}=-w^{1/2}$ in
the equation
\begin{equation}
w\alpha-\frac{\alpha^{3}}{3}=-\frac{2}{3}w^{3/2}-s^{2}
\label{eq-chvar1}
\end{equation}
is given by the series
\begin{equation}
\alpha+w^{1/2}=\sum_{n=1}^{\infty}\frac{i^{n}}{n!}\frac{\Gamma\left(\frac{3}{2}n-1\right)}
{\Gamma\left(\frac{n}{2}\right)}w^{-3 n/4+1/2}\frac{1}{3^{n-1}}
s^{n}
\label{eq-seriescomplex}
\end{equation}
which is uniformly convergent for $|s|\leq 2/\sqrt{3}$.
\label{lema1}
\end{lemma}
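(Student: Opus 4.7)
\medskip

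The plan is to reduce the complex equation \eqref{eq-chvar1} to the real case already handled in \eqref{eq-tayloralpha}, by rescaling $\alpha$ with an appropriate power of $w$. Since $w=e^{i\varphi}$ is a fixed unit-modulus constant, the substitution $\alpha=w^{1/2}\eta$ should turn both saddle points $\alpha_s=\pm w^{1/2}$ into $\eta_s=\pm 1$ and bring the cubic on the left-hand side into the canonical form $\eta-\eta^{3}/3$.

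Concretely, I would first carry out the substitution $\alpha=w^{1/2}\eta$ in \eqref{eq-chvar1}. The left-hand side becomes $w^{3/2}(\eta-\eta^{3}/3)$, so after dividing by $w^{3/2}$ the equation reads
\begin{equation*}
\eta-\frac{\eta^{3}}{3}=-\frac{2}{3}-\frac{s^{2}}{w^{3/2}}=-\frac{2}{3}-\tau^{2},
\qquad \tau:=s\,w^{-3/4}.
\end{equation*}
This is exactly the real equation whose solution near $\eta_{s}=-1$ is given by the Lagrange inversion applied in Section~\ref{sec-reviewreal}, so applying \eqref{eq-tayloralpha} with $\tau$ in place of $s$ yields
\begin{equation*}
\eta+1=\sum_{n=1}^{\infty}\frac{i^{n}}{n!}\frac{\Gamma\!\left(\frac{3n}{2}-1\right)}{\Gamma\!\left(\frac{n}{2}\right)}\frac{1}{3^{n-1}}\tau^{n}.
\end{equation*}
Undoing the scaling with $\alpha+w^{1/2}=w^{1/2}(\eta+1)$ and replacing $\tau^{n}=s^{n}w^{-3n/4}$ produces the combined factor $w^{1/2}w^{-3n/4}=w^{-3n/4+1/2}$, giving exactly the series \eqref{eq-seriescomplex}.

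For convergence, the real series \eqref{eq-tayloralpha} is uniformly convergent on $|\tau|<2/\sqrt{3}$ since the branch points of $\eta(\tau)$ sit at $\tau=\pm 2/\sqrt{3}$ (they correspond to the other saddle point $\eta=+1$, where $\tau^{2}=f(-1)-f(+1)=-4/3$). Because $|w|=1$, we have $|\tau|=|s|$, so the same radius $2/\sqrt{3}$ is transferred to the variable $s$, and the uniform convergence on compact subsets of $\{|s|<2/\sqrt{3}\}$ is inherited directly.

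The only real subtlety, and the point I expect to need care, is branch selection: the symbols $w^{1/2}$, $w^{3/2}$, $w^{-3/4}$ must be assigned compatible principal values so that the identity $w^{1/2}\cdot w^{1/2}=w$, the relation $\tau^{2}=s^{2}w^{-3/2}$ and the reconstruction $f(w,\alpha_s)=-\tfrac{2}{3}w^{3/2}$ used in \eqref{eq-taylorscomplex} all hold simultaneously with the same branch convention. Since $w$ is fixed on the unit circle and one may work within a simply connected sector of $\varphi$, the principal branches suffice, but this coherence is what makes the otherwise routine rescaling argument rigorous.
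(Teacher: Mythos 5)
Your proposal is correct, but it takes a genuinely different route from the paper. The paper applies Lagrange inversion directly to the complex equation: it writes down the explicit inverse $h^{-1}(w,\alpha)=\pm i(\alpha+w^{1/2})\sqrt{w^{1/2}-\tfrac{1}{3}(\alpha+w^{1/2})}$ for the steepest descent path through $\alpha_{s}=-w^{1/2}$, computes the coefficients $g_{n}$ of \eqref{eq-alphaseries} in closed form (obtaining the factor $(\mp i)^{n}w^{-3n/4+1/2}$ of \eqref{eq-gn2}), and fixes the sign by the orientation of $\mathcal{L}_{32}$. You instead exploit the homogeneity of the cubic: the substitution $\alpha=w^{1/2}\eta$, $\tau=s\,w^{-3/4}$ collapses \eqref{eq-chvar1} onto the already-solved real case \eqref{eq-tayloralpha}, and the factor $w^{-3n/4+1/2}$ drops out of the back-substitution. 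Your argument is more economical and makes the origin of the $w$-powers transparent, whereas the paper's direct computation yields the explicit $g_{n}$ that are reused in the sequel. Two points deserve explicit mention in your write-up. First, \eqref{eq-tayloralpha} is stated in Section \ref{sec-reviewreal} only for real $s$, while your $\tau$ is genuinely complex; you need to invoke the fact that the Lagrange inversion series is a power series convergent on the full complex disc $|\tau|<2/\sqrt{3}$ (the nearest singularities being the images $\tau=\pm 2i/\sqrt{3}$ of the other saddle $\eta=+1$, at distance $2/\sqrt{3}$ from the origin), so that evaluating it at complex $\tau$ is legitimate. Second, the sign ambiguity $\tau=\pm s\,w^{-3/4}$ is exactly the $(\mp i)^{n}$ ambiguity the paper resolves by its orientation convention; you correctly flag branch coherence as the delicate point, and the choice $\tau=+s\,w^{-3/4}$ is the one consistent with the paper's choice of the positive sign in \eqref{eq-gn2}.
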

\begin{proof}[\bfseries Proof]
The power series of $\alpha$ as a function of $s$  can be calculated
from the function $s=h(\alpha)$ by applying Lagrange's inverse
theorem~\footnote{The application of Taylor's theorem directly to
the function $\alpha=h^{-1}(s)$  as described in equations
\eqref{eq-alphahatsa} is more cumbersome.}. 
Thus
\cite{whittakerandwatson:27},
\begin{equation}
\alpha=\alpha_{s}+\sum_{n=1}^{\infty}
g_{n}\frac{[s-h(\alpha_{s})]^{n}}{n!}
\label{eq-alphaseries}
\end{equation}
where
\begin{equation}
g_{n}=\lim_{\alpha\rightarrow\alpha_{s}}\left[\frac{d^{n-1}}{d\alpha^{n-1}}
\left(\frac{\alpha-\alpha_{s}}{h(\alpha)-h(\alpha_{s})}\right)^{n}\right].
\label{eq-gn}
\end{equation}
The computation of~\eqref{eq-gn} for $h(\alpha)=\pm
i(\alpha+w^{1/2})\sqrt{w^{1/2}-\frac{1}{3}(\alpha+w^{1/2})}$ that
defines the steepest descent path through $\alpha_{s}=-w^{1/2}$
produces the result
\begin{equation}
g_{n}=(\mp
i)^{n}\frac{\Gamma\left(\frac{3}{2}n-1\right)}{\Gamma\left(\frac{n}{2}\right)}
\left(\frac{1}{3}\right)^{n-1} w^{-3 n/4+1/2}.
\label{eq-gn2}
\end{equation}
The positive sign will be chosen for the integration between
$s=-\infty$ to $s=\infty$ to follow the integration path sense shown
in Figure \ref{fig:paths}. Indeed, the choice of a positive sign in
\eqref{eq-gn2} implies that $s<0$ for the lower half-plane part of
the contour $\mathcal{L}_{32}$ and $s>0$ for its upper half-plane
part. Choosing the negative sign would reverse this correspondence.
This is shown in Figure \ref{fig:sheets1}b, together with the fact
that this series expands around the ramification point at $s=0$
reaching over $\alpha_{2}(s)$ and $\alpha_{3}(s)$. Hence,
equation~\eqref{eq-alphaseries} becomes
\begin{equation}
\alpha+w^{1/2}=\sum_{n=1}^{\infty}\frac{i^{n}}{n!}\frac{\Gamma\left(\frac{3}{2}n-1\right)}
{\Gamma\left(\frac{n}{2}\right)}w^{-3 n/4+1/2}\frac{1}{3^{n-1}}
s^{n}.
\label{eq-gnb}
\end{equation}
Equation~\eqref{eq-gnb} leads to equation \eqref{eq-tayloralpha}
when $w=1$. The upper bounding series in \eqref{eq-mtest} can also
be used for the Weierstrass' M-test on the uniform convergence of
\eqref{eq-gnb} in the disk $|s|\leq\frac{2}{\sqrt{3}}$.

\end{proof}


The term-by-term derivative of $\alpha$ with respect to $s$ in the
series \eqref{eq-seriescomplex} produces a uniform convergent series
on every compact subset of the disk $|s|<\frac{2}{\sqrt{3}}$ that
does not reach its boundary~\cite[pp.326-28, v.I]{markushevich:65}.
Therefore, when integrating the differentiated series we must do it
in compact subsets of the path section defined by $s \in
(\unaryminus\frac{2}{\sqrt{3}},\frac{2}{\sqrt{3}})$, which forms the
central section of the integration path. The next two integration
sections beyond this one are constructed to use the power expansions
around $s=\pm 2/\sqrt{3}$. For the case of $\mbox{\textup{arg} }z =
2\pi/3$, the point at $s=2/\sqrt{3}$ corresponds to a branch point
and a Puiseux series is obtained. Indeed, this would have been also
the case in Lemma \ref{lema1} should $s$ have been used instead of
$s^{2}$ in equation \eqref{eq-chvar1}~\footnote{The choice of $s^2$
instead of $s$ in \eqref{eq-chvar1} is motivated by the fact that
two sheets result from the branching at $s=0$, and it is also
convenient for the steepest descents method. The series in
\eqref{eq-seriescomplex} is thus sensitive to the sign of $s$
although the algebraic curve in \eqref{eq-chvar1} is not. Had $s$
been used instead of $s^{2}$, this expansion would have taken the
shape of a Puiseux series.\label{refnote}}. 
Furthermore, when $\mbox{\textup{arg} }z$ approaches $2\pi/3$, the
radius of convergence of the power series around $s=2/\sqrt{3}$
tends to zero and the expansion must be made around the branch point
outside the path for better convergence. With the next two lemmas,
the necessary series for representing the path in the neighborhoods
of $s=\pm 2/\sqrt{3}$ are given, after replacing the term $s^{2}$ by
$\frac{4}{3}+t$ in \eqref{eq-chvar1} so that the concerned expansion
is done around $t=0$.

\begin{lemma}
The power series of $\alpha=\alpha(t)$ around $t=0$ for the solution
of the equation
\begin{equation}
w\alpha-\frac{\alpha^{3}}{3}=-\frac{2}{3}w^{3/2}-\frac{4}{3}-t
\label{eq-chvar2}
\end{equation}
for the case of $w\neq e^{\, i 2\pi/3}$ is given by $\alpha^{+}$ and
$\alpha^{-}$~\footnote{The superscript $+$ indicates hence forth
that the expansion for $\alpha$ is computed around a point in the
second quadrant of the complex plane, where as the superscript $-$
does so for the third quadrant. The same stands for the roots
$\alpha_{0}^{+}$ and $\alpha_{0}^{-}$} as below
\begin{align}
\alpha^{\pm}&=\alpha_{0}^{\pm}+\sum_{n=1}^{\infty}\frac{(-1)^{n-1}}{n!}3^{n}
\,(\alpha_{0}^{\pm}-\alpha_{0}^{\mp})^{-2
n+1}\,(\alpha_{0}^{\pm}-\alpha_{0})^{-n}\,t^{n}\sum_{k=0}^{n-1}\binom{n-1}{k}
\nonumber
\\
&\times \frac{\Gamma(2 n-1-k)}{\Gamma(n)}
\frac{\Gamma(n+k)}{\Gamma(n)}
(\alpha_{0}^{\pm}-\alpha_{0}^{\mp})^{k}\,(\alpha_{0}^{\pm}-\alpha_{0})^{-k}
\label{eq-seriescomplex2}
\end{align}
where
\begin{align}
\alpha_{0}&=\frac{w}{\left(2+w^{3/2}+2\sqrt{1+w^{3/2}}\right)^{1/3}}+\left(2+w^{3/2}+2\sqrt{1+w^{3/2}}\right)^{1/3}\nonumber
\\
\alpha_{0}^{+}&=-\frac{(1+i\sqrt{3})w}{2\left(2+w^{3/2}+2\sqrt{1+w^{3/2}}\right)^{1/3}}
-\frac{1}{2}(1-i\sqrt{3})\left(2+w^{3/2}+2\sqrt{1+w^{3/2}}\right)^{1/3}\nonumber
\\
\alpha_{0}^{-}&=-\frac{(1-i\sqrt{3})w}{2\left(2+w^{3/2}+2\sqrt{1+w^{3/2}}\right)^{1/3}}
-\frac{1}{2}(1+i\sqrt{3})\left(2+w^{3/2}+2\sqrt{1+w^{3/2}}\right)^{1/3}.
\label{eq-alphas0pm}
\end{align}
The radius of convergence is given by
\begin{align}
r^{-\sign\{\arg z\}}&=\dfrac{4}{3}\nonumber \\
r^{+\sign\{\arg
z\}}&=\min\left\{\dfrac{4}{3},\dfrac{4}{3}\left|1+w^{3/2}\right|\right\}
\end{align}
for the corresponding superscripted $\alpha$ series in
\eqref{eq-seriescomplex2}.
\label{lema1b}
\end{lemma}
\begin{proof}[\bfseries Proof]
Equation \eqref{eq-chvar2} can be seen as $t=h(\alpha)$ where, as in
Lemma \ref{lema1}, Lagrange's inverse theorem can be applied. Thus,
we write
\begin{equation}
h(\alpha)=\frac{1}{3}(\alpha-\alpha_{0})(\alpha-\alpha_{0}^{+})(\alpha-\alpha_{0}^{-})
\label{eq-ht}
\end{equation}
where $\alpha_{0}, \alpha_{0}^{+}$ and $\alpha_{0}^{-}$ are given in
\eqref{eq-alphas0pm}. If $w\neq e^{\, i 2\pi/3}$, the three roots
are distinct. By applying equations \eqref{eq-alphaseries} and
\eqref{eq-gn} and the Leibniz rule for product differentiation, we
obtain
\begin{align}
g_{n}&=(-1)^{n-1}\,3^{n}\sum_{k=0}^{n-1}\binom{n-1}{k}
\frac{\Gamma(2 n-1-k)}{\Gamma(n)}\nonumber
\\
&\times \frac{\Gamma(n+k)}{\Gamma(n)}
(\alpha_{0}^{\pm}-\alpha_{0}^{\mp})^{-2
n+k+1}\,(\alpha_{0}^{\pm}-\alpha_{0})^{-n-k}
\end{align}
This results in \eqref{eq-seriescomplex2}. The branch points of
\eqref{eq-chvar2} are $t=-4/3,-4/3\,(1+w^{3/2}), \infty$. As there
are no singular points in \eqref{eq-chvar2}, and by considering
Figure \ref{fig:sheets2}a, it is straightforward to find that the
convergence radius of \eqref{eq-seriescomplex2} is $r^{+\sign\{\arg
z\}}=\min\left\{\frac{4}{3},\frac{4}{3}\left|1+w^{3/2}\right|\right\}$
and $r^{-\sign\{\arg z\}}=\frac{4}{3}$ for $\alpha^{+}$ and
$\alpha^{-}$.
\end{proof}

\begin{figure}
\resizebox{12cm}{!}{
\begin{tikzpicture}[scale=2]
    \def\offset{0.2}
    \def\offsetb{3.25}
    \def\offsetc{0.75}
    \def\offsetv{0.0}
    \def\offsetvb{0.16}
    \def\offsetvc{-1.4}
    \def\labeloffset{0}
    \coordinate (A1) at (0+\offset,0);
    \coordinate (A1b) at (-0.5+\offset+\labeloffset,0);
    \coordinate (B1) at (1.8+\offset,0);
    \coordinate (C1) at (1.9+\offset,0.35);
    \coordinate (D1) at (2.55+\offset,0.35);
    \coordinate (A2) at (0+\offset,0.35);
    \coordinate (A2b) at (-0.5+\offset+\labeloffset,0.35);
    \coordinate (B2) at (0.45+\offset,0.35);
    \coordinate (C2) at (0.55+\offset,0.7);
    \coordinate (D2) at (2.55+\offset,0.7);
    \coordinate (A3) at (0+\offset,0.7);
    \coordinate (A3a) at (-0.5+\offset+\labeloffset,1.1);
    \coordinate (A3b) at (-0.5+\offset+\labeloffset,0.7);
    \coordinate (A3c) at (0+\offsetb+\labeloffset,1.1+\offsetv);
    \coordinate (A4b) at (-0.5+\offset+\labeloffset,-0.95);
    \coordinate (A5b) at (-0.5+\offset+\labeloffset,-1.35);
    \coordinate (B3) at (0.45+\offset,0.7);
    \coordinate (C3) at (0.55+\offset,0.35);
    \coordinate (D3) at (1.8+\offset,0.35);
    \coordinate (E3) at (1.9+\offset,0);
    \coordinate (F3) at (2.55+\offset,0);
    \coordinate (G1inf) at (2.3+\offset,0);
    \coordinate (G2inf) at (2.3175+\offset,0.35);
    \coordinate (G3inf) at (2.3+\offset,0.7);
    \coordinate (G13inf) at (2.4+\offset,0.7);
    \coordinate (G31inf) at (2.4+\offset,0);
    \coordinate (G22inf) at (2.41+\offset,0.35);

    \coordinate (A1P) at (0+\offsetb,0);
    \coordinate (B1P) at (1.8+\offsetb-\offsetc,0);
    \coordinate (C1P) at (1.9+\offsetb-\offsetc,0.35);
    \coordinate (D1P) at (2.55+\offsetb-\offsetc,0.35);
    \coordinate (A2P) at (0+\offsetb,0.35);
    \coordinate (B2P) at (1.17+\offsetb-\offsetc,0.35);
    \coordinate (C2P) at (1.27+\offsetb-\offsetc,0.7);
    \coordinate (D2P) at (2.55+\offsetb-\offsetc,0.7);
    \coordinate (A3P) at (0+\offsetb,0.7);
    \coordinate (B3P) at (1.17+\offsetb-\offsetc,0.7);
    \coordinate (C3P) at (1.27+\offsetb-\offsetc,0.35);
    \coordinate (D3P) at (1.8+\offsetb-\offsetc,0.35);
    \coordinate (E3P) at (1.9+\offsetb-\offsetc,0);
    \coordinate (F3P) at (2.55+\offsetb-\offsetc,0);
    \coordinate (G1Pinf) at (2.3+\offsetb-\offsetc,0);
    \coordinate (G2Pinf) at (2.3175+\offsetb-\offsetc,0.35);
    \coordinate (G3Pinf) at (2.3+\offsetb-\offsetc,0.7);
    \coordinate (G13Pinf) at (2.4+\offsetb-\offsetc,0.7);
    \coordinate (G31Pinf) at (2.4+\offsetb-\offsetc,0);
    \coordinate (G22Pinf) at (2.41+\offsetb-\offsetc,0.35);

    \coordinate (S3m1) at (1.+\offset,0.035);
    \coordinate (S3m2) at (1.24+\offset,0.035);
    \coordinate (S3M1) at (1.30+\offset,0.035);
    \coordinate (S3M2) at (1.55+\offset,0.035);
    \coordinate (S2m1) at (1.+\offset,0.385);
    \coordinate (S2m2) at (1.24+\offset,0.385);
    \coordinate (S2M1) at (1.30+\offset,0.385);
    \coordinate (S2M2) at (1.55+\offset,0.385);

    \coordinate (ARROW3a) at (1.27+\offset,0.05);
    \coordinate (ARROW3b) at (1.335+\offset,0.15);
    \coordinate (ARROW3c) at (1.45+\offset,0.205);
    \coordinate (ARROW2a) at (1.27+\offset,0.4);
    \coordinate (ARROW2b) at (1.335+\offset,0.5);
    \coordinate (ARROW2c) at (1.45+\offset,0.555);
    \coordinate (ARROW1a) at (1.27+\offset,0.75);
    \coordinate (ARROW1b) at (1.335+\offset,0.85);
    \coordinate (ARROW1c) at (1.45+\offset,0.905);

    \coordinate (S3Pm1) at (0.95+\offsetb-\offsetc,0.035);
    \coordinate (S3Pm2) at (1.2+\offsetb-\offsetc,0.035);
    \coordinate (S3PM1) at (1.25+\offsetb-\offsetc,0.035);
    \coordinate (S3PM2) at (1.52+\offsetb-\offsetc,0.035);
    \coordinate (S12Pm1) at (0.95+\offsetb-\offsetc,0.385);
    \coordinate (S12Pm2) at (1.125+\offsetb-\offsetc,0.385);
    \coordinate (S12Pm3) at (1.175+\offsetb-\offsetc,0.525);
    \coordinate (S12PM1) at (1.3+\offsetb-\offsetc,0.385);
    \coordinate (S12PM2) at (1.4975+\offsetb-\offsetc,0.385);
    \coordinate (S12PM3) at (1.25+\offsetb-\offsetc,0.545);

    \coordinate (ARROW3Pa) at (1.27+\offsetb-\offsetc,0.05);
    \coordinate (ARROW3Pb) at (1.335+\offsetb-\offsetc,0.15);
    \coordinate (ARROW3Pc) at (1.45+\offsetb-\offsetc,0.205);
    \coordinate (ARROW12Pa) at (1.27+\offsetb-\offsetc,0.525);
    \coordinate (ARROW12Pb) at (1.335+\offsetb-\offsetc,0.525);
    \coordinate (ARROW12Pc) at (1.6+\offsetb-\offsetc,0.555);

    \draw (A1) -- (B1) -- (C1) -- (D1);
    \filldraw [gray] (0.5+\offset,0.525) circle (0.5pt);
    \node at (A3a) {a$)$};
    \node at (A3b) {Sheet 1};
    \node at (A3c) {b$)$};
    \draw (A2) -- (B2) -- (C2) -- (D2);
    \node at (A2b) {Sheet 2};
    \draw (A3) -- (B3) -- (C3) -- (D3) -- (E3) -- (F3);
    \filldraw [gray] (1.85+\offset,0.175) circle (0.5pt);
    \filldraw [gray] (1.27+\offset,0) circle (0.5pt);
    \filldraw [gray] (1.27+\offset,0.35) circle (0.5pt);
    \filldraw [gray] (1.27+\offset,0.7) circle (0.5pt);
    \node at (A1b) {Sheet 3};
    \draw[dotted,very thick] (G1inf) -- (G13inf);
    \draw[dotted,thick] (G2inf) -- (G22inf);
    \draw[dotted,very thick] (G3inf) -- (G31inf);
    \draw[densely dashed] (0+\offset,-0.35) -- (2.55+\offset,-0.35);%
    \draw (0.47+\offset,-0.3) -- (0.47+\offset,-0.4);
    \node at (0.5+\offset,-0.55) {$t{\scriptstyle =-{\textstyle \frac{4}{3}}(1+w^{3/2})}$};
    \draw (1.27+\offset,-0.3) -- (1.27+\offset,-0.4);
    \node at (1.24+\offset,-0.55) {$t{\scriptstyle }=0$};
    \draw (1.87+\offset,-0.3) -- (1.87+\offset,-0.4);
    \node at (1.8+\offset,-0.55) {$t{\scriptstyle =-{\textstyle \frac{4}{3}}}$};
    \draw (2.35+\offset,-0.3) -- (2.35+\offset,-0.4);
    \node at (2.35+\offset,-0.55) {$t{\scriptstyle}=\infty$};
    \draw[<-,densely dashed,thick] (S3m1) -- (S3m2);
    \draw[->,densely dashed,thick] (S3M1) -- (S3M2);
    \draw[->] (ARROW1a) -- (ARROW1b);
    \node at (ARROW1c) {${\scriptstyle \alpha_{\tiny 0}}$};
    \draw[->] (ARROW2a) -- (ARROW2b);
    \node at (ARROW2c) {${\scriptstyle \alpha_{\tiny 0}^{\tiny +}}$};
    \draw[->] (ARROW3a) -- (ARROW3b);
    \node at (ARROW3c) {${\scriptstyle \alpha_{\tiny 0}^{\tiny -}}$};
    \draw[<-,densely dashed,thick] (S2m1) -- (S2m2);
    \draw[->,densely dashed,thick] (S2M1) -- (S2M2);

    \draw (A1P) -- (B1P) -- (C1P) -- (D1P);
    \draw (A2P) -- (B2P) -- (C2P) -- (D2P);
    \draw (A3P) -- (B3P) -- (C3P) -- (D3P) -- (E3P) -- (F3P);
    \filldraw [gray] (1.22+\offsetb-\offsetc,0) circle (0.5pt);
    \filldraw [gray] (1.22+\offsetb-\offsetc,0.525) circle (0.5pt);
    \filldraw [gray] (1.85+\offsetb-\offsetc,0.175) circle (0.5pt);
    \draw[dotted,very thick] (G1Pinf) -- (G13Pinf);
    \draw[dotted,thick] (G2Pinf) -- (G22Pinf);
    \draw[dotted,very thick] (G3Pinf) -- (G31Pinf);
    \draw[densely dashed] (0+\offsetb,-0.35) -- (2.55+\offsetb-\offsetc,-0.35);
    \draw (1.27+\offsetb-\offsetc,-0.3) -- (1.27+\offsetb-\offsetc,-0.4);
    \node at (1.24+\offsetb-\offsetc,-0.55) {$t{\scriptstyle}=0$};
    \draw (1.87+\offsetb-\offsetc,-0.3) -- (1.87+\offsetb-\offsetc,-0.4);
    \node at (1.8+\offsetb-\offsetc,-0.55) {$t{\scriptstyle =-{\textstyle \frac{4}{3}}}$};
    \draw (2.35+\offsetb-\offsetc,-0.3) -- (2.35+\offsetb-\offsetc,-0.4);
    \node at (2.35+\offsetb-\offsetc,-0.55) {$t{\scriptstyle}=\infty$};
    \draw[<-,densely dashed,thick] (S3Pm1) -- (S3Pm2);
    \draw[->,densely dashed,thick] (S3PM1) -- (S3PM2);
    \draw[->,thin] (ARROW12Pa) -- (ARROW12Pb);
    \node at (ARROW12Pc) {${\scriptstyle \alpha_{\tiny 0}=\alpha_{\tiny 0}^{\tiny +}}$};
    \draw[->] (ARROW3Pa) -- (ARROW3Pb);
    \node at (ARROW3Pc) {${\scriptstyle \alpha_{\tiny 0}^{\tiny -}}$};
    \draw[<-,densely dashed,thick] (S12Pm1) -- (S12Pm2) -- (S12Pm3);
    \draw[->,densely dashed,thick] (S12PM3) -- (S12PM1) -- (S12PM2);

\end{tikzpicture}}
\caption{Sheets of the Riemann surface for the complex algebraic
curve given by \eqref{eq-chvar2} and $\arg z > 0$. a) This is the
case for $w\neq e^{\, i 2\pi/3}$. The series in Lemma \ref{lema1b}
are expansions around points $\alpha_{0}^{+}$ and $\alpha_{0}^{-}$.
The radius of the convergence disks are given by the distance to the
branch points at $t=-\frac{4}{3}$ and $t=-\frac{4}{3}(1+w^{3/2})$.
b) For the case $w= e^{\, i 2\pi/3}$, $\alpha_{0}=\alpha_{0}^{+}$ is
a branch point and we need a Puiseux series around it. The radius of
convergence is given then by $\rho=\frac{4}{3}$. If we were to deal
with the case of $\arg z > 0$, the labeling of the sheets would
change to follow the corresponding branches, but not the topology.}
\label{fig:sheets2}
\end{figure}

If $w = e^{\, i 2\pi/3}$, the roots in \eqref{eq-alphas0pm} are
$\alpha_{0}=\alpha_{0}^{+}=e^{\, i\pi/3}, \alpha_{0}^{-}=-2\,e^{\,
i\pi/3}$, and 
$t=0$ becomes a branch point of the algebraic curve in
\eqref{eq-chvar2}. The following Lemma deals with this case.
\begin{lemma}
The power series of $\alpha=\alpha(t)$ around $t=0$ for equation
\eqref{eq-chvar2} and $w=e^{\, i 2\pi/3}$ is given by $\alpha^{+}$
and $\alpha^{-}$ as
\begin{subequations}
\begin{align}
\alpha^{+}&=\alpha_{0}^{+}+\sum_{n=1}^{\infty}\frac{(-1)^{2
n-1}}{n!}3^{n/2}\,\frac{\Gamma(\frac{3
n}{2}-1)}{\Gamma(\frac{n}{2})}\,(\alpha_{0}^{+}-\alpha_{0}^{-})^{-3n/2+1}\,t^{n/2}\mid_{b}
\label{eq-seriescomplex3a}
\\
\alpha^{-}&=\alpha_{0}^{-}+\sum_{n=1}^{\infty}\frac{(-1)^{n-1}}{n!}3^{n}\,\frac{\Gamma(3
n-1)}{\Gamma(2 n)}\,(\alpha_{0}^{-}-\alpha_{0}^{+})^{-3n+1}\,t^{n}
\label{eq-seriescomplex3b}
\end{align}
\end{subequations}
with $\alpha_{0}^{+}=e^{\, i\pi/3}$ and $\alpha_{0}^{-}=-2\,e^{\,
i\pi/3}$ and where $\mid_{b}$ indicates the branch. The radius of
convergence is given by $|t|<\rho=\frac{4}{3}$. For the case of
$w=e^{\unaryminus i 2\pi/3}$, the superscripts $\pm$ must be
interchanged in both \eqref{eq-seriescomplex3a} and
\eqref{eq-seriescomplex3b} as well as taking
$\alpha_{0}^{+}=-2\,e^{\unaryminus i\pi/3}$ and
$\alpha_{0}^{-}=e^{\unaryminus i\pi/3}$. The branch noted by
$\mid_{b}$ is that of $(-1)^{1/2}=\pm i$ for $w=e^{\pm i 2\pi/3}$.
\label{lema1c}

\end{lemma}
\begin{proof}[\bfseries Proof]
The solutions of $h(\alpha)=0$ as defined in Lemma \ref{lema1b} and
for $w = e^{\, i \varphi}$ with $|\varphi|\leq 2\pi/3$ are all
simple roots of $h(\alpha)$ except for the case of $\varphi = \pm
2\pi/3$. In the case of $\varphi = 2\pi/3$, we have
\begin{equation}
h(\alpha)=\frac{1}{3}(\alpha-\alpha_{0}^{+})^{2}(\alpha-\alpha_{0}^{-})
\label{eq-ht2}
\end{equation}
with $\alpha_{0}^{+}=e^{\, i\pi/3}$ and $\alpha_{0}^{-}=-2\,e^{\,
i\pi/3}$. For $\alpha_{0}^{+}=e^{\, i\pi/3}$, $t=0$ is a branch
point of order 1, and it is convenient for an expansion around
$\alpha_{0}^{+}$ to write $t=h(\alpha)$ as
\begin{equation}
t^{1/2}=-\frac{1}{\sqrt{3}}(\alpha-\alpha_{0}^{+})\sqrt{\alpha-\alpha_{0}^{-}}
\label{eq-tbp}
\end{equation}
where the minus sign has been selected as the branch to integrate
within so that we remain at the steepest descent path. When $t<0$,
the branch for the square root function in \eqref{eq-tbp} is that of
$+i$ as $\alpha_{0}^{+}$ is in the second quadrant. Applying the
Lagrange inversion theorem for a power expansion around
$\alpha_{0}^{+}$ in terms of $t^{1/2}$, we obtain
\begin{align}
g_{n}&=(-1)^{2 n-1}\,3^{n/2}\frac{\Gamma(\frac{3
n}{2}-1)}{\Gamma(\frac{n}{2})}(\alpha_{0}^{+}-\alpha_{0}^{-})^{-3
n/2+1}
\end{align}
and the Puiseux series in equation \eqref{eq-seriescomplex3a}
results from it. 
\begin{figure}
\begin{tikzpicture}[scale=2]
    \def\offset{-1.35}
    \def\offsetb{0.05}
    \def\offsetc{-0.05}
    \def\offsetd{0.4}
    \def\offsetv{-0.3}
    \def\offsetvb{0.16}
    \def\offsetvc{-1.4}
    \def\offsetvd{0.175}
    \def\offsetvshat{-0.8}
    \def\offsetvt{-1.25}
    \def\offsetve{-2.6}
    \def\labeloffset{0}
    \coordinate (A1) at (0.2+\offset+\offsetb,0);
    \coordinate (B1) at (-0.1+\offset,\offsetv);
    \coordinate (C1) at (5.95+\offset,0);
    \coordinate (D1) at (0.2+\offset+\offsetb,0);
    \coordinate (D1b) at (1.525+\offset+\offsetb,0);
    \coordinate (D1b2) at (1.575+\offset+\offsetb,0);
    \coordinate (D1c) at (4.375+\offset+\offsetc,0);
    \coordinate (D1c2) at (4.425+\offset+\offsetc,0);
    \coordinate (D1d) at (6+\offset+\offsetc,0);
    \coordinate (D1d2) at (0.675+\offset+\offsetb,0);
    \coordinate (D1e) at (2.775+\offset+\offsetc,0);
    \coordinate (D1e2) at (3.775+\offset+\offsetb,0);
    \coordinate (D1f) at (4.95+\offset+\offsetc,0);
    \coordinate (E1) at (0.2+\offset,\offsetv);
    \coordinate (D2) at (0.7+\offset+\offsetb,0);
    \coordinate (E2) at (0.7+\offset,\offsetv);
    \coordinate (D3) at (1.55+\offset+\offsetb,0);
    \coordinate (E3) at (1.55+\offset,\offsetv);
    \coordinate (E3b) at (1.55+\offset,\offsetvd);
    \coordinate (D4) at (2.1+\offset+\offsetb,0);
    \coordinate (E4) at (2.1+\offset,\offsetv);
    \coordinate (D5) at (2.95+\offset+\offsetc,0);
    \coordinate (E5) at (2.9+\offset,\offsetv);
    \coordinate (E5b) at (2.85+\offset,\offsetc/2+\offsetvd);
    \coordinate (D6) at (3.55+\offset+\offsetc,0);
    \coordinate (E6) at (3.55+\offset,\offsetv);
    \coordinate (D7) at (4.4+\offset+\offsetc,0);
    \coordinate (E7) at (4.35+\offset,\offsetv);
    \coordinate (E7b) at (4.35+\offset,\offsetvd);
    \coordinate (D8) at (5.3+\offset+\offsetc,0);
    \coordinate (E8) at (5.15+\offset,\offsetv);
    \coordinate (D9) at (6+\offset+\offsetc,0);
    \coordinate (E9) at (5.95+\offset,\offsetv);
    \coordinate (E10) at (0+\offsetd,-3.0+\offsetve);
    \coordinate (F1) at (0.7+\offset+\offsetb,-0.05);
    \coordinate (F2) at (0.7+\offset+\offsetb,0.05);
    \coordinate (G1) at (2.1+\offset+\offsetb,-0.05);
    \coordinate (G2) at (2.1+\offset+\offsetb,0.05);
    \coordinate (H1) at (3.75+\offset+\offsetb,-0.05);
    \coordinate (H2) at (3.75+\offset+\offsetb,0.05);
    \coordinate (I1) at (4.95+\offset+\offsetb,-0.05);
    \coordinate (I2) at (4.95+\offset+\offsetb,0.05);

    \coordinate (B1t) at (-0.1+\offset,\offsetvshat);
    \coordinate (E1t) at (0.2+\offset,\offsetvshat);
    \coordinate (E2t) at (1.625+\offset,\offsetvshat);
    \coordinate (E8t) at (4.4+\offset,\offsetvshat);
    \coordinate (E9t) at (5.95+\offset,\offsetvshat);

    \coordinate (B1t2) at (-0.1+\offset,\offsetvt);
    \coordinate (E2t2) at (0.7+\offset,\offsetvt);
    \coordinate (E3t2) at (1.55+\offset,\offsetvt);
    \coordinate (E4t2) at (2.85+\offset,\offsetvt);
    \coordinate (E6t2) at (3.75+\offset,\offsetvt);
    \coordinate (E7t2) at (4.35+\offset,\offsetvt);
    \coordinate (E8t2) at (4.95+\offset,\offsetvt);

    \draw (A1) -- (C1);
    \node at (B1) {\scalebox{1.25}{$s:$}};
    \filldraw [gray] (D1) circle (1.pt);
    \node at (E1) {$-\infty$};
    \draw (F1) -- (F2);
    \filldraw [gray] (D3) circle (1.pt);
    \node at (E3) {$\unaryminus\dfrac{2}{\sqrt{3}}$};
    \node at (E3b) {$\alpha_{0}^{-}$};
    \filldraw [gray] (D5) circle (1.pt);
    \node at (E5) {$0$};
    \node at (E5b) {$\alpha_{s}$};
    \draw (H1) -- (H2);
    \filldraw [gray] (D7) circle (1.pt);
    \node at (E7) {$\dfrac{2}{\sqrt{3}}$};
    \node at (E7b) {$\alpha_{0}^{+}$};
    \draw (I1) -- (I2);
    \filldraw [gray] (D9) circle (1.pt);
    \node at (E9) {$\infty$};
    \node at (E10) {$\begin{array}{r l} m&=\left\{\begin{array}{cc}
                         \min\left\{\frac{4}{3},\frac{4}{3}\left|1+w^{3/2}\right|\right\} & \mbox{if } w\neq e^{\, i 2\pi/3}
                         \\ & \\
                         \frac{4}{3} & \mbox{if } w= e^{\, i 2\pi/3}
                       \end{array}
    \right.
    \\& \\
    s_{0}&=1\\& \\
    s_{1}&=\sqrt{\dfrac{5}{3}}\\& \\
    \hat{s}_{0}&=\left(\dfrac{5}{3}\right)^{-1/3}
    \\& \\ t_{0}&=\dfrac{1}{3}
    \end{array}$};

    \coordinate (B1t3) at (1.7+\offsetd,-4.25+\offsetve);
    \node at (B1t3){\framebox[30\totalheight]{$^{\ast}$ With the corresponding formulas in Lemmas \ref{lema1}
    and \ref{lema1b}}};

    \node at (B1t) {\scalebox{1.25}{$\hat{s}:$}};
    \node at (E1t) {$0$};
    \node at (E2t) {$\left(\dfrac{3}{4}\right)^{1/3}$};
    \node at (E8t) {$\left(\dfrac{3}{4}\right)^{1/3}$};
    \node at (E9t) {$0$};

    \node at (B1t2) {\scalebox{1.25}{$t:$}};
    \node at (E2t2) {$\dfrac{4}{3}$};
    \node at (E3t2) {$0$};
    \node at (E4t2) {$\unaryminus\dfrac{4}{3}$};
    \node at (E6t2) {$\unaryminus m$};
    \node at (E7t2) {$0$};
    \node at (E8t2) {$m$};
    \draw [decorate,decoration={brace,amplitude=5pt,raise=4ex}]
    (D1) -- (D1b) node[midway,yshift=3.5em]{\begin{tabular}{c} Convergence region \\ for $\hat{\alpha}_{3}(\hat{s})$\end{tabular}};
    \draw [decorate,decoration={brace,amplitude=5pt,raise=4ex}]
    (D1b2) -- (D1c) node[midway,yshift=3.5em]{\begin{tabular}{c} Convergence region \\ for  $\alpha(s)$\end{tabular}};
    \draw [decorate,decoration={brace,amplitude=5pt,raise=4ex}]
    (D1c2) -- (D1d) node[midway,yshift=3.5em]{\begin{tabular}{c} Convergence region \\ for   $\hat{\alpha}_{2}(\hat{s})$\end{tabular}};
    \draw [decorate,decoration={brace,amplitude=5pt,raise=12ex}]
    (D1d2) -- (D1e) node[midway,yshift=7em]{\begin{tabular}{c} Convergence region \\ for $\alpha^{-}(t)^{\ast}$\end{tabular}};
    \draw [decorate,decoration={brace,amplitude=5pt,raise=12ex}]
    (D1e2) -- (D1f) node[midway,yshift=7em]{\begin{tabular}{c} Convergence region \\ for $\alpha^{+}(t)^{\ast}$\end{tabular}};

    \def\offset{-1.35}
    \def\offsetb{0.05}
    \def\offsetc{-0.05}
    \def\offsetd{0.4}
    \def\offsetv{-0.3}
    \def\offsetvb{0.16}
    \def\offsetvc{-1.4}
    \def\offsetvd{0.175}
    \def\offsetvshat{-0.8}
    \def\offsetvt{-1.25}
    \def\labeloffset{0}
    \coordinate (A1) at (0.2+\offset+\offsetb,\offsetve);
    \coordinate (B1) at (-0.1+\offset,\offsetv+\offsetve);
    \coordinate (C1) at (5.95+\offset,\offsetve);
    \coordinate (D1) at (0.2+\offset+\offsetb,\offsetve);
    \coordinate (D1b) at (0.975+\offset+\offsetb,\offsetve);
    \coordinate (D1b2) at (1.025+\offset+\offsetb,\offsetve);
    \coordinate (D1c) at (2.075+\offset+\offsetb,\offsetve);
    \coordinate (D1c2) at (2.125+\offset+\offsetb,\offsetve);
    \coordinate (D1d) at (3.525+\offset+\offsetb,\offsetve);
    \coordinate (D1d2) at (3.575+\offset+\offsetb,\offsetve);
    \coordinate (D1e) at (5.025+\offset+\offsetb,\offsetve);
    \coordinate (D1e2) at (5.075+\offset+\offsetb,\offsetve);
    \coordinate (D1f) at (5.975+\offset+\offsetc,\offsetve);
    \coordinate (E1) at (0.2+\offset,\offsetv+\offsetve);
    \coordinate (D2) at (1+\offset+\offsetb,\offsetve);
    \coordinate (E2) at (1+\offset,\offsetv+\offsetve);
    \coordinate (D3) at (1.55+\offset+\offsetb,\offsetve);
    \coordinate (E3) at (1.1+\offset,\offsetv+\offsetve);
    \coordinate (E3b) at (1.55+\offset,\offsetvd+\offsetve);
    \coordinate (D4) at (2.1+\offset+\offsetb,\offsetve);
    \coordinate (E4) at (2.1+\offset,\offsetv+\offsetve);
    \coordinate (D5) at (2.95+\offset+\offsetc,\offsetve);
    \coordinate (E5) at (2.9+\offset,\offsetv+\offsetve);
    \coordinate (E5b) at (2.85+\offset,\offsetc/2+\offsetvd+\offsetve);
    \coordinate (D6) at (3.55+\offset+\offsetc,\offsetve);
    \coordinate (E6) at (3.6+\offset,\offsetv+\offsetve);
    \coordinate (D7) at (4.4+\offset+\offsetc,\offsetve);
    \coordinate (E7) at (5.1+\offset,\offsetv+\offsetve);
    \coordinate (E7b) at (4.35+\offset,\offsetvd+\offsetve);
    \coordinate (D8) at (5.3+\offset+\offsetc,\offsetve);
    \coordinate (E8) at (5.15+\offset,\offsetv+\offsetve);
    \coordinate (D9) at (6+\offset+\offsetc,\offsetve);
    \coordinate (E9) at (5.95+\offset,\offsetv+\offsetve);
    \coordinate (E10) at (0+\offsetd,-2.75+\offsetve);
    \coordinate (F1) at (1+\offset+\offsetb,-0.05+\offsetve);
    \coordinate (F2) at (1+\offset+\offsetb,0.05+\offsetve);
    \coordinate (G1) at (2.1+\offset+\offsetb,-0.05+\offsetve);
    \coordinate (G2) at (2.1+\offset+\offsetb,0.05+\offsetve);
    \coordinate (H1) at (3.55+\offset+\offsetb,-0.05+\offsetve);
    \coordinate (H2) at (3.55+\offset+\offsetb,0.05+\offsetve);
    \coordinate (I1) at (5.05+\offset+\offsetb,-0.05+\offsetve);
    \coordinate (I2) at (5.05+\offset+\offsetb,0.05+\offsetve);

    \coordinate (B1t) at (-0.1+\offset,\offsetvshat+\offsetve);
    \coordinate (E1t) at (0.2+\offset,\offsetvshat+\offsetve);
    \coordinate (E2t) at (1.1+\offset,\offsetvshat+\offsetve);
    \coordinate (E8t) at (5.1+\offset,\offsetvshat+\offsetve);
    \coordinate (E9t) at (5.95+\offset,\offsetvshat+\offsetve);

    \coordinate (B1t2) at (-0.1+\offset,\offsetvt+\offsetve);
    \coordinate (E2t2) at (1.1+\offset,\offsetvt+\offsetve);
    \coordinate (E3t2) at (1.4+\offset,\offsetvt+\offsetve);
    \coordinate (E4t2) at (2.1+\offset,\offsetvt+\offsetve);
    \coordinate (E6t2) at (3.55+\offset,\offsetvt+\offsetve);
    \coordinate (E7t2) at (4.35+\offset,\offsetvt+\offsetve);
    \coordinate (E8t2) at (5.1+\offset,\offsetvt+\offsetve);

    \draw (A1) -- (C1);
    \node at (B1) {\scalebox{1.25}{$s:$}};
    \filldraw [gray] (D1) circle (1.pt);
    \node at (E1) {$-\infty$};
    \draw (F1) -- (F2);
    \filldraw [gray] (D3) circle (1.pt);
    \node at (E3) {$\unaryminus s_{1}$};
    \node at (E3b) {$\alpha_{0}^{-}$};
    \node at (E4) {$\unaryminus s_{0}$};
    \draw (G1) -- (G2);
    \filldraw [gray] (D5) circle (1.pt);
    \node at (E5) {$0$};
    \node at (E5b) {$\alpha_{s}$};
    \node at (E6) {$s_{0}$};
    \draw (H1) -- (H2);
    \filldraw [gray] (D7) circle (1.pt);
    \node at (E7) {$s_{1}$};
    \node at (E7b) {$\alpha_{0}^{+}$};
    \draw (I1) -- (I2);
    \filldraw [gray] (D9) circle (1.pt);
    \node at (E9) {$\infty$};

    \node at (B1t) {\scalebox{1.25}{$\hat{s}:$}};
    \node at (E1t) {$0$};
    \node at (E2t) {$\hat{s}_{0}$};
    \node at (E8t) {$\hat{s}_{0}$};
    \node at (E9t) {$0$};

    \node at (B1t2) {\scalebox{1.25}{$t:$}};
    \node at (E2t2) {$t_{0}$};
    \node at (E4t2) {$\unaryminus t_{0}$};
    \node at (E6t2) {$\unaryminus t_{0}$};
    \node at (E8t2) {$t_{0}$};
    \draw [decorate,decoration={brace,amplitude=5pt,raise=4ex}]
    (D1) -- (D1b) node[midway,yshift=3em]{$\mathcal{L}_{-\infty}$};
    \draw [decorate,decoration={brace,amplitude=5pt,raise=4ex}]
    (D1b2) -- (D1c) node[midway,yshift=3em]{$\mathcal{L}_{\alpha_{0}^{-}}$};
    \draw [decorate,decoration={brace,amplitude=5pt,raise=4ex}]
    (D1c2) -- (D1d) node[midway,yshift=3em]{$\mathcal{L}_{s}$};
    \draw [decorate,decoration={brace,amplitude=5pt,raise=4ex}]
    (D1d2) -- (D1e) node[midway,yshift=3em]{$\mathcal{L}_{\alpha_{0}^{+}}$};
    \draw [decorate,decoration={brace,amplitude=5pt,raise=4ex}]
    (D1e2) -- (D1f) node[midway,yshift=3em]{$\mathcal{L}_{\infty}$};
\end{tikzpicture}
\caption{Integration segments. The regions of convergence for the
series expansions of Lemmas \ref{lema1} to \ref{lema1c} are
represented in the above part of the figure. The bottom part shows
the selection of segments for the integration segments as they are
defined to produce the addends in equation \eqref{eq-aiIexp}.}
\label{fig:convergenceregions}
\end{figure}

If the power expansion is done around $\alpha_{0}^{-}$, instead of
\eqref{eq-tbp} we start from
\begin{equation}
t=\frac{1}{3}(\alpha-\alpha_{0}^{+})^{2}(\alpha-\alpha_{0}^{-})
\label{eq-tbp2}
\end{equation}
to produce
\begin{align}
g_{n}&=(-1)^{n-1}\,3^{n}\frac{\Gamma(3 n-1}{\Gamma(2
n)}(\alpha_{0}^{-}-\alpha_{0}^{+})^{-3 n+1}
\end{align}
and thus obtain the expansion in \eqref{eq-seriescomplex3b}. In this
case, we have an expansion in powers of $t$ that are positive
integers. 

As shown in Figure \ref{fig:sheets2}b, there is a sheet of the
complex values of $t$ where $t=0$ is not a branch point and that
corresponds to the branch where $\alpha_{0}^{-}=\alpha(t=0)$,
whereas there is another one that contains both $t=0$ and $t=4/3$ as
branch points and where $\alpha_{0}^{+}=\alpha(t=0)$. The series in
\eqref{eq-seriescomplex3a} and \eqref{eq-seriescomplex3b}
correspond, respectively, to the power expansions around $t=0$ on
both sheets.

For the case of $\varphi =\unaryminus 2\pi/3$, the series for
$\alpha^{+}$ and $\alpha^{-}$ must be interchanged as well as the
values of $\alpha_{0}^{+}$ and $\alpha_{0}^{-}$, which must be also
replaced by their conjugates. The branch for $t^{n/2}$ is such that
$t<0$ produce negative imaginary values for odd values of $n$.
\end{proof}

When the distance of the branch point at $t=-4/3\,(1+w^{3/2})$ to
$t=0$ is equal to $2/3$, the circumference centred at this point
crosses the $t$ axis at $t=-2/3$ and $t=1/3$ (see Figure
\ref{fig:covergencedisks}). As $\arg z$ approaches $\pm 2\pi/3$, the
branch point approaches $t=0$ and it is more convenient to expand
$\alpha=\alpha(t)$ around the branch point for the segment $t\in
[-1/3,1/3]$ than using the expansions of Lemma \ref{lema1c}.

\begin{figure}
\begin{tikzpicture}[scale=2,cap=round]
    \begin{scope}[thick,font=\normalsize]
    \draw [->] (-2.75,0) -- (1.25,0) node [above left, xshift=1.5cm]  {$\scalebox{1.25}{Re\{t\}}$};
    \draw [->] (0,-2.1) -- (0,1.5) node [below right, yshift=0.25cm] {$\scalebox{1.25}{Im\{t\}}$};


    \path [draw=none,fill=black!25,semitransparent] (-4/3,0) circle (4/3);
    \draw[dashed] (-4/3,0) circle (4/3);
    \path [draw=none,fill=black!25,semitransparent] (-0.1667,-0.645) circle (4/3);
    \draw[dashed] (-0.1667,-0.645) circle (4/3);
    \path [draw=none,fill=black!30,semitransparent] (0,0) circle (2/3);
    \draw[dashed] (0,0) circle (2/3);
    \foreach \n in {-2.5,-2.0,-1.5,-1.0,-0.5,0.5,1.0}{%
        \draw (\n,-2pt) -- (\n,2pt)   node [above] {$\n$};
    }
    \foreach \n in {-2.25,-1.75,-1.25,-0.75,-0.5,-0.25,0.25,0.5,0.75}{%
        \draw (\n,-1pt) -- (\n,1pt);
    }
    \foreach \n in {-2.0,-1.5,-1.0,-0.5,0.5,1.0}{%
        \draw (-2pt,\n) -- (2pt,\n)   node [right] {$\n$};
    }
    \foreach \n in {-1.75,-1.25,-0.75,-0.25,0.25,0.75,1.25}{%
        \draw (-1pt,\n) -- (1pt,\n);
    }
    \path [draw=black,fill=none,semitransparent] (-0.1667,-0.645) circle (0.816);
    \path [draw=black,fill=none,semitransparent] (0,0) circle (1/3);
    \filldraw [black] (-4/3,0) circle (0.75pt);
    \filldraw [gray] (-2/3,0) circle (0.75pt);
    \filldraw [gray] (-1/3,0) circle (0.75pt);
    \filldraw [gray] (1/3,0) circle (0.75pt);
    \filldraw [black] (-0.1667,-0.645) circle (0.75pt);
    \node at (-4/3-0.1,-0.175) {$\scriptstyle t=-4/3$};
    \node at (-2/3-0.15,-0.175) {$\scriptstyle t=-2/3$};
    \node at (-1/3+0.05,-0.175) {$\scriptstyle t=-1/3$};
    \node at (1/3+0.025-0.1,-0.175) {$\scriptstyle t=1/3$};
    \node at (-0.1667-0.575,-0.645+0.125) {$\scriptstyle t={-4/3\,(1+w_{b}^{3/2})}$};
    \end{scope}
\end{tikzpicture}
\caption{The convergence disks of the expansions around
$t=0,-4/3,-4/3\,(1+w_{b}^{3/2})$ with $w_{b}=e^{\, i \varphi_{b}}$
and $\varphi_{b}=\frac{2}{3}\arctan(-7/8)$ are shown above. The
expansion around $t=-\frac{4}{3} (s=0)$ corresponds to the expansion
in Lemma \ref{lema1} and has a converge disk of radius
$\rho=\frac{4}{3}$. The expansion around $t=0$ has a decreasing
radius for its convergence disk as $\varphi \rightarrow
\frac{2\pi}{3}$ due to the approach of the branch point at
$t=-4/3\,(1+w^{3/2})$. When $\frac{2}{3}\arctan(-7/8)\le |\varphi| <
\frac{2\pi}{3}$, the series expansion is done around the branch
point instead of around $t=0$ as described in Lemma \ref{lema1d}.}
\label{fig:covergencedisks}
\end{figure}

\begin{lemma}
The power series of $\alpha=\alpha(t)$ around $t=-4/3\,(1+w^{3/2})$
for equation \eqref{eq-chvar2} and $\arg z > 0$ is given by equation
\eqref{eq-seriescomplex3a} with $t+4/3\,(1+w^{3/2})$ replacing $t$
and with $\alpha_{0}^{+}=w^{1/2}$ and $\alpha_{0}^{-}=-2\,w^{1/2}$.
As in Lemma \ref{lema1c}, the superscripts $+$ must be replaced by
$-$ if $\arg z <0$, so that the series analogous to equation
\eqref{eq-seriescomplex3a} is now for $\alpha^{-}$ and the binomial
inside is $(\alpha_{0}^{-}-\alpha_{0}^{+})$ instead of
$(\alpha_{0}^{+}-\alpha_{0}^{-})$. The radius of convergence is
given by $\rho=\frac{4}{3}$.
\label{lema1d}

\end{lemma}
\begin{proof}[\bfseries Proof]
The demonstration is analogous to the one of Lemma \ref{lema1c}. The
radius of convergence of the resulting series around
$t=-4/3\,(1+w^{3/2})$ is $\rho=\frac{4}{3}$, as can be easily seen
in Figure \ref{fig:sheets2}a). A more detailed view of the
convergence disk is given in Figure \ref{fig:covergencedisks}. An
expansion around $t=0$ has a decreasing radius as $-4/3\,(1+w^{3/2})
\rightarrow 0$ and therefore it is preferable to expand around
$t=-4/3(1+w^{3/2})$ for $\varphi$ in $w=e^{\, i \varphi}$ such that
$\varphi_{b}\equiv\frac{2}{3}\arctan(-7/8)\le |\varphi| <
\frac{2\pi}{3}$. In effect, when $\varphi=\varphi_{b}$, a
circumference of radius $1/2\,\rho$ centred at $t=-4/3(1+w^{3/2})$
crosses the real axis of the complex plane for $t$ at $t=-2/3$ and
$t=1/3$. If we now consider the angular range given by
$\varphi_{b}\equiv\frac{2}{3}\arctan(-7/8)\le |\varphi| <
\frac{2\pi}{3}$, it can be readily seen in Figure
\ref{fig:covergencedisks} that the segment $t\in[-1/3,1/3]$ makes
use of half the convergence radius at most.

\end{proof}


With the preceding four lemmas we have computed series expansions
that can be used to integrate the path sections
$\mathcal{L}_{\alpha_{0}^{-}}$, $\mathcal{L}_{s}$ and
$\mathcal{L}_{\alpha_{0}^{+}}$ in Figure
\ref{fig:convergenceregions}. For the purpose of integrating along
the section of the path that connects the convergency disks of the
series in Lemmas \ref{lema1c} and \ref{lema1d} with $s=\infty$ in
\eqref{eq-chvar1}, a new variable $\hat{s}=1/s^{2/3}$ is introduced,

\begin{equation}
w\alpha-\frac{\alpha^{3}}{3}=-\frac{2}{3}w^{3/2}-\frac{1}{\hat{s}^{3}}.
\label{eq-alphau}
\end{equation}
The resulting complex algebraic curve is analyzed in the following
Remark, and the series expansions for $\alpha=\hat{\alpha}(\hat{s})$
around $\hat{s}$ is computed in the subsequent Lemma. 

\begin{figure}

\begin{tabular}{c c}
\resizebox{6cm}{!}{\includegraphics{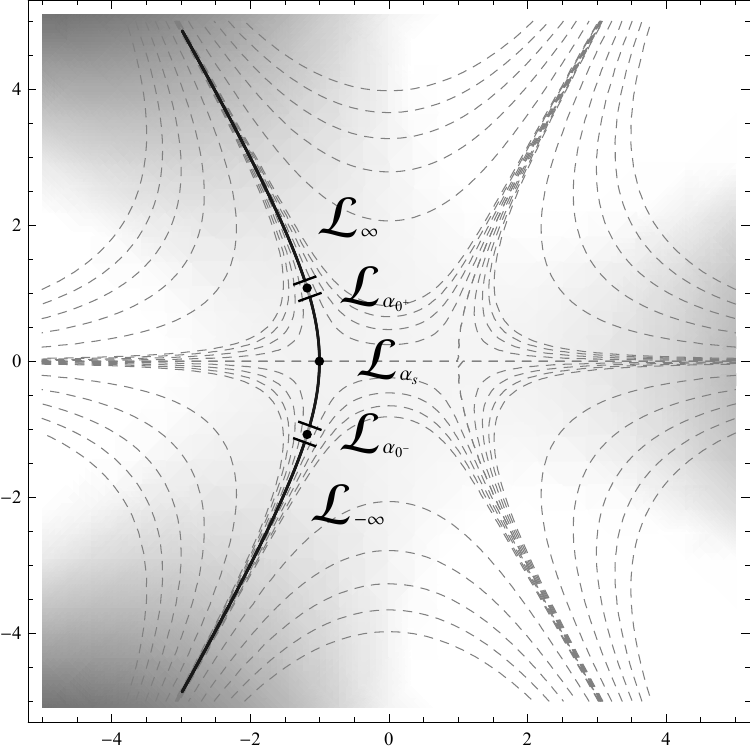}} &
\resizebox{6cm}{!}{\includegraphics{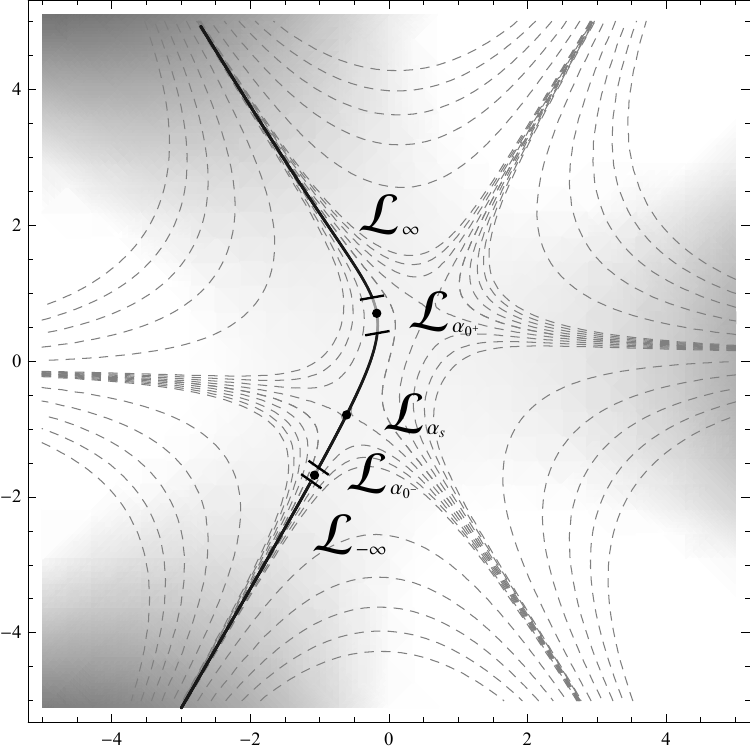}}
\vspace{-7pt} \\
{\scriptsize(a)} ${\scriptstyle \arg z = 0}$ & {\scriptsize(b)} ${\scriptstyle \arg z = 7\pi/12}$ \\
\resizebox{6cm}{!}{\includegraphics{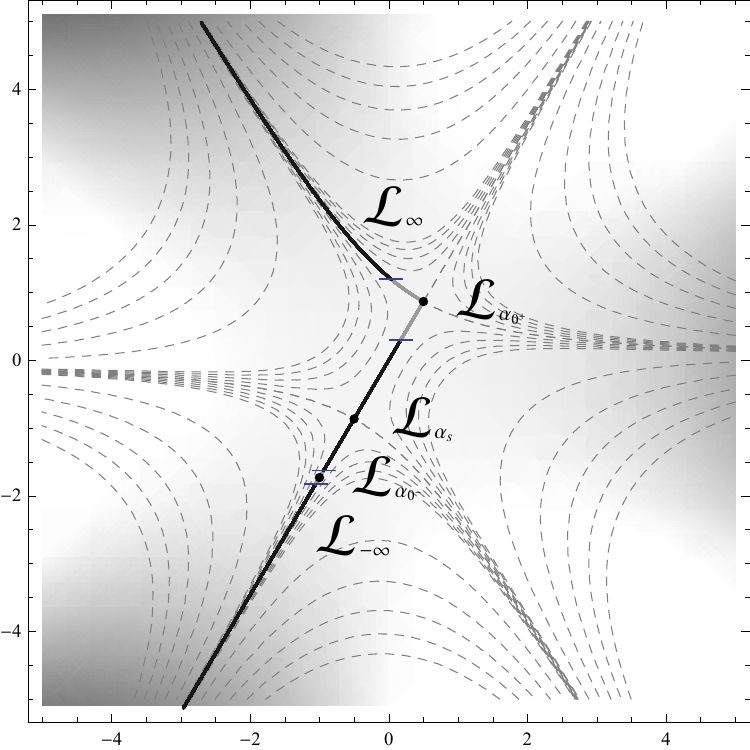}} &
\resizebox{6cm}{!}{\includegraphics{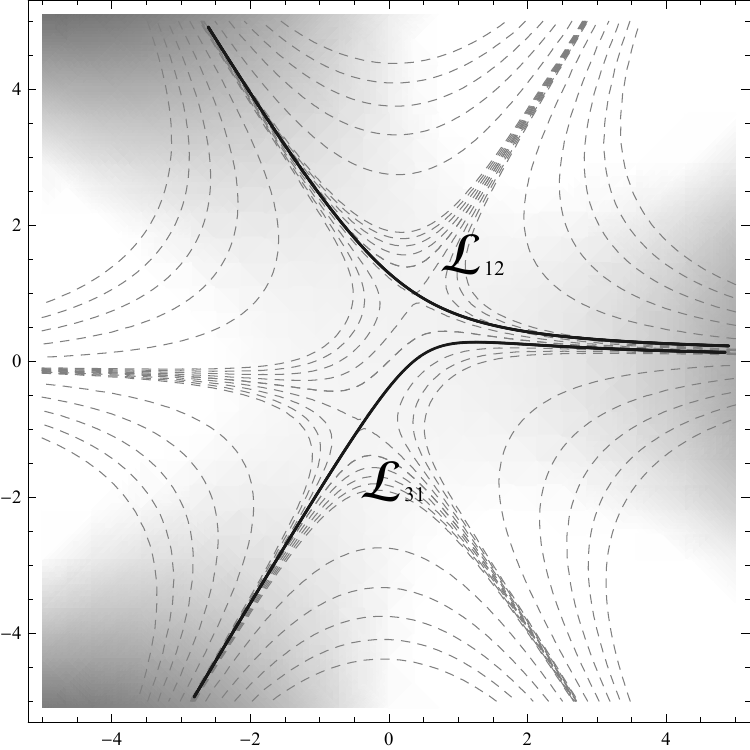}}
\vspace{-7pt} \\
{\scriptsize(c)} ${\scriptstyle \arg z = 2\pi/3}$ & {\scriptsize(d)}
${\scriptstyle \arg z = 3\pi/4}$
\end{tabular}
\caption{Steepest-descent integration paths for different phases of
the $z$ variable. The background colour of the complex plane
corresponds to the value of the real part of the argument of the
exponential as given in equation \eqref{eq-taylorscomplex0},
$f_{r}$: the white-to-black scale maps to higher-to-lower values of
such argument. The points around which the functions $\alpha(s)$ and
$\alpha(t)$ are expanded appear with a black dot. Five sections are
distinguished in each integration path when $|\arg z| \le 2\pi/3$.
If $|\arg z| > 2\pi/3$, the steepest descent paths are homotopic to
$\mathcal{L}_{31}$ and $\mathcal{L}_{12}$ in Figure
\ref{fig:paths}.}
\end{figure}
\setcounter{figure}{5}
\begin{figure}
\begin{tabular}{c c}
\resizebox{6cm}{!}{\includegraphics{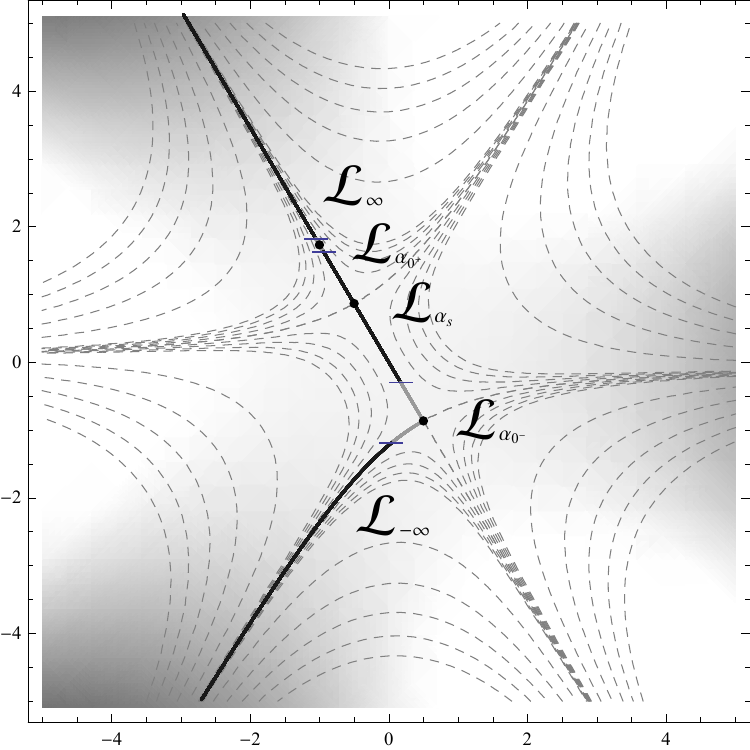}} &
\resizebox{6cm}{!}{\includegraphics{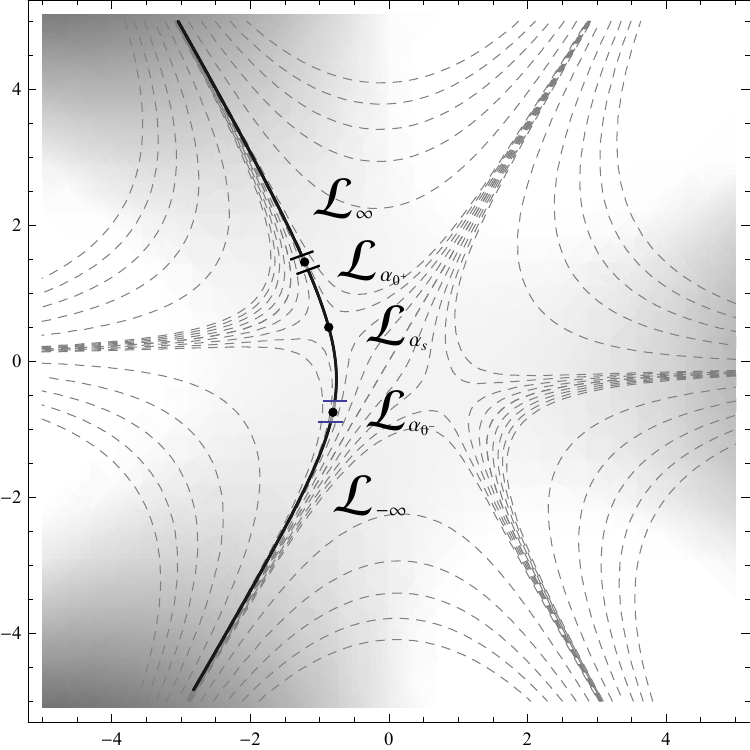}}
\vspace{-7pt} \\
{\scriptsize(e)} ${\scriptstyle \arg z = -2\pi/3}$ &
{\scriptsize(f)} ${\scriptstyle \arg z = -\pi/3}$
\\
\end{tabular}
\caption{(Cont.) Steepest-descent integration paths for different
phases of the $z$ variable. The background colour of the complex
plane corresponds to the value of the real part of the argument of
the exponential as given in equation \eqref{eq-taylorscomplex0},
$f_{r}$: the white-to-black scale maps to higher-to-lower values of
such argument. The points around which the functions $\alpha(s)$ and
$\alpha(t)$ are expanded appear with a black dot. Five sections are
distinguished in each integration path when $|\arg z| \le 2\pi/3$.
If $|\arg z| > 2\pi/3$, the steepest descent paths are homotopic to
$\mathcal{L}_{31}$ and $\mathcal{L}_{12}$ in Figure
\ref{fig:paths}.}
\label{fig:intpaths}
\end{figure}
\begin{remark}
{\itshape The three solutions of \eqref{eq-alphau} in terms of
$\hat{s}$ are given by
\begin{align}
\hat{\alpha}_{1}(\hat{s})&=\frac{\chi(\hat{s}
\sqrt{w})}{\sqrt[\leftroot{-1}\uproot{2}\scriptstyle 3]{2}\,\hat{s}}
+\frac{\sqrt[\leftroot{-1}\uproot{2}\scriptstyle
3]{2}\,\hat{s}\,w}{\chi(\hat{s} \sqrt{w})}\nonumber
\\
\hat{\alpha}_{2}(\hat{s})&=\frac{e^{i 2 \pi/3}\,\chi(\hat{s}
\sqrt{w})} {2^{1/3}\,\hat{s}}+\frac{2^{1/3}\,\,e^{-i 2
\pi/3}\,\hat{s}\,w}{\chi(\hat{s} \sqrt{w})} \nonumber
\\
\hat{\alpha}_{3}(\hat{s})&=\frac{e^{-i 2 \pi/3}\,\chi(\hat{s}
\sqrt{w})} {2^{1/3}\,\hat{s}}+\frac{2^{1/3}\,\,e^{i 2
\pi/3}\,\hat{s}\,w}{\chi(\hat{s} \sqrt{w})} \nonumber
\\
&\chi(t)=[3+2\,t^{3}+\sqrt{3}\, \eta(t)]^{1/3}\nonumber
\\
&\eta(t)= \sqrt{3+4\,t^{3}}.
\label{eq-alphahats}
\end{align}
Solutions $\hat{\alpha}_{2}(\hat{s})$ y $\hat{\alpha}_{3}(\hat{s})$
define the integration paths $\mathcal{L}_{-\infty}$ and
$\mathcal{L}_{\infty}$ for the case of $|\arg w|=|\arg z|\le
2\pi/3$, respectively.

Contrary to the case seen in the previous Lemmas, Taylor's theorem
is preferred over Lagrange inversion theorem for the computation of
\eqref{eq-phisai} with \eqref{eq-alphahats}. To perform the required
derivatives, functional composition must be used throughout the
whole derivation. In particular, Fa\`{a} di Bruno's formula
expressed in terms of Bell polynomials $B_{n,k}$ is recursively used
to deal with the functional form of the solutions in
\eqref{eq-alphahats}.

Taylor's theorem is to be applied separately to each addend of the
expressions of $\hat{\alpha}_{i}(\hat{s}),\, i=1,2,3$, and therefore
the branch points that limit the radii of convergence are not the
ones corresponding to the complex algebraic curve in
\eqref{eq-alphau} but to the one of which they are solutions. It is
straightforward to see that such a curve is
\begin{equation}
\hat{s}^{3}\,z^{2}-\left(2\,w^{3/2}\,\hat{s}^{3}-3\right)z+\hat{s}^{3}\,w^{3}=0.
\label{eq-cal3}
\end{equation}
Its branch points are found in $\hat{s}=\zeta, e^{i 2 \pi/3}\,\zeta$
and $e^{-i 2 \pi/3}\,\zeta$ with
$\zeta=\left(\frac{3}{4}\right)^{1/3}\,w^{-1/2}$. There are only two
sheets in $z=z(\hat{s})$ for \eqref{eq-cal3} and all the branch
points connect them. Therefore, the power expansions that will
result from the application of Taylor's theorem around $\hat{s}=0$
will have radii of convergence given by
$\rho=\left(\dfrac{3}{4}\right)^{1/3}$.}
\label{rem2}
\end{remark}

\begin{lemma}
The $n$-th derivative of $\chi$ at $t=0$ is given for $n \neq 0$ by
\begin{equation}
\chi^{(n)}(0)=\left\{\begin{array}{l
l}6^{\frac{1}{3}}\sum_{k=1}^{n/3}\sum_{i=0}^{k}\sum_{l=0}^{k-i}(-1)^{k-i-l}
\frac{2^{i-k}n!}{3^{i}\,i!\left(\frac{n}{3}-i\right)! l!(k-i-l)!}&
\\
\qquad\qquad \times \left(\frac{1}{3}\right)_{k}
h_{\frac{n}{3}-i}\left(\frac{l}{2}\right) & \mbox{if } n \equiv
0\!\! \pmod 3
\\

\\
0 & \mbox{ otherwise}
\end{array}\right.
\label{eq-chinthder0}
\end{equation}
where
\begin{equation}
h_{p}(x)=\left(\frac{4}{3}\right)^{p}(x)_{p}
\end{equation}
and $(x)_{p}$ is the falling factorial of $x$ of order $p$.
\label{lema2}
\end{lemma}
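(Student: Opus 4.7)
Since $\chi(t) = \bigl[3 + 2t^{3} + \sqrt{3}\sqrt{3 + 4t^{3}}\bigr]^{1/3}$ depends on $t$ only through $u = t^{3}$, its Maclaurin series at the origin contains only powers $t^{3m}$. This immediately gives the vanishing of $\chi^{(n)}(0)$ whenever $n \not\equiv 0 \pmod 3$, which is the second branch of the claimed formula. For $n = 3m$, viewing $\chi$ as a function of $u$ one has $\chi^{(n)}(0) = n!\,[u^{m}]\chi(u)$, so the task reduces to reading off the coefficient of $u^{m}$.

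The plan is to write $\chi(u) = 6^{1/3}\bigl(1 + y/6\bigr)^{1/3}$ with $y = 2u + 3M(u)$ and $M(u) = (1+4u/3)^{1/2} - 1$, factoring out $\chi(0) = 6^{1/3}$. Expanding the outer cube root by the binomial series produces coefficients $(1/3)_{k}/k!$ (with $(\cdot)_{k}$ denoting the falling factorial), which explains both the outer summation index $k$ and the $(1/3)_{k}$ factor in the statement. Applying the ordinary binomial theorem to $y^{k} = (2u + 3M)^{k}$ then separates the polynomial contribution from the one containing $M$, and cancellation of the powers of $6$ with $2^{i}3^{k-i}$ yields the factor $2^{i-k}/3^{i}$ seen in the formula. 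At this stage $[u^{m}]\chi$ is a double sum over $k$ and $i$ whose only nontrivial ingredient is $[u^{m-i}]M^{k-i}$.

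The key step is the linearization
\begin{equation*}
M^{j} = \bigl((1+4u/3)^{1/2} - 1\bigr)^{j} = \sum_{l=0}^{j}\binom{j}{l}(-1)^{j-l}(1+4u/3)^{l/2},
\end{equation*}
after which the binomial series for $(1+4u/3)^{l/2}$ gives the closed form
\begin{equation*}
[u^{q}]M^{j} = \frac{(4/3)^{q}}{q!}\sum_{l=0}^{j}\binom{j}{l}(-1)^{j-l}(l/2)_{q}.
\end{equation*}
Substituting with $q = n/3 - i$ and $j = k - i$, multiplying through by $n!$, and identifying $h_{n/3-i}(l/2) = (4/3)^{n/3-i}(l/2)_{n/3-i}$ reproduces the triple sum in the claimed form.

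The principal obstacle is justifying that the outer sum over $k$ terminates at $n/3$ rather than extending indefinitely. This is settled by a finite-difference observation: the inner sum over $l$ equals, up to the factor $1/(k-i)!$, the $(k-i)$-th forward difference at the origin of the polynomial $l \mapsto (l/2)_{m-i}$, which has degree exactly $m-i$ in $l$. Since finite differences of order exceeding the degree vanish, the summand is zero whenever $k - i > m - i$, i.e.\ whenever $k > n/3$. The outer sum therefore truncates automatically at $k = n/3$, and the remaining bookkeeping of factorials, powers of two and three, and signs is routine.
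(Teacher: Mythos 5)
Your proof is correct, and it reaches \eqref{eq-chinthder0} by a genuinely different route from the paper. The paper computes $\chi^{(n)}(0)$ via Fa\`{a} di Bruno's formula, reducing everything to partial Bell polynomials $B_{n,k}$ evaluated on the derivatives of the inner function $3+2t^{3}+\sqrt{3}\,\eta(t)$, and then invokes two cited identities (the evaluation of $B_{m,k}$ on a binomial sequence, which is precisely a finite-difference formula, and the convolution identity for Bell polynomials of a sum of variable sequences) to untangle the result into the triple sum. You instead bypass the Bell-polynomial machinery entirely: the substitution $u=t^{3}$ disposes of the $n\not\equiv 0\pmod 3$ case at once, and the factorization $\chi=6^{1/3}(1+y/6)^{1/3}$ with $y=2u+3M$, $M=(1+4u/3)^{1/2}-1$, followed by three nested binomial expansions, extracts $[u^{m}]\chi$ directly; the prefactors $2^{i-k}/3^{i}$, $(1/3)_{k}$ and $h_{m-i}(l/2)$ all emerge transparently, and your identification of $\sum_{l}\binom{j}{l}(-1)^{j-l}(l/2)_{q}$ as a $j$-th forward difference of a degree-$q$ polynomial cleanly justifies the truncation of the outer sum at $k=n/3$ (the same truncation could also be seen a priori from $y=O(u)$, hence $[u^{m}]y^{k}=0$ for $k>m$). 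Your argument is more elementary and self-contained, at the cost of the explicit bookkeeping that the paper delegates to the two quoted Bell-polynomial identities; the paper's version, in turn, makes the combinatorial structure reusable for the companion coefficients $b_{3m}$ in Corollary \ref{cor-coeff}, where the same identities are applied to the $-1/3$ power.
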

\begin{proof}[\bfseries Proof]
The following expression is obtained for the $n$-th derivative of
$\chi$ at $t=0$ after using Fa\`{a} di Bruno's formula expressed in
terms of Bell polynomials $B_{n,k}$,
\begin{align}
\chi^{(n)}(0)&=6^{\frac{1}{3}}\sum_{k=1}^{n}\left(\frac{1}{3}\right)_{k}
\frac{1}{6^{k}}\nonumber
\\
& \times
B_{n,k}(0,0,12+\sqrt{3}\,\eta^{(3)}(0),0,0,\sqrt{3}\,\eta^{(6)}(0),0,\ldots)
\end{align}
with
\begin{equation}
\eta^{(3 m)}(0)=
\sqrt{3}\left(\dfrac{1}{2}\right)_{m}\dfrac{2^{2m}\,(3
m)_{2m}}{3^{m}}
\end{equation}
and $(x)_{n}=x(x-1)\cdots(x-n+1)$. By using Bell's polynomial
definition, this expression can be simplified to
\begin{multline}
B_{n,k}(0,0,\sqrt{3}\eta^{(3)},0,0,\sqrt{3}\eta^{(6)},0,0,\sqrt{3}\eta^{(9)},\ldots)
=3^{k}\,(n)_{2n/3}
\\
\times\left\{\begin{array}{l l}
B_{n/3,k}(h_{1}(\frac{1}{2}),h_{2}(\frac{1}{2}),h_{3}(\frac{1}{2}),\ldots,h_{n/3-k+1}(\frac{1}{2}))&
\mbox{if } n \equiv 0\!\! \pmod 3
\\
0 & \mbox{ otherwise}
\end{array}\right.
\label{eq-Bn1}
\end{multline}
with
\begin{equation}
h_{p}(x)=\left(\frac{4}{3}\right)^{p}(x)_{p}
\end{equation}
forming a binomial sequence \cite{yang:2008}. 
Equation \eqref{eq-chinthder0} is obtained, after some manipulation,
by applying the following two properties to \eqref{eq-Bn1}:
\begin{itemize}
\item For a binomial sequence $\{\varphi_{n}(x)\}$ and all integers
$m,k\geq 0$, we have \cite{yang:2008}
\begin{equation}
B_{m,k}(\varphi_{1}(x),\varphi_{2}(x),\varphi_{3}(x),\ldots)=\sum_{j=0}^{k}\frac{(-1)^{k-j}}{j!\,(k-j)!}\varphi_{m}(j\frac{1}{2})
\end{equation}
\item Sums in the variables of Bell's polynomials can be written as
\cite{comtet:74} 
\begin{multline}
B_{m,k}(y_{1}+y_{1}^{\prime},y_{2}+y_{2}^{\prime},y_{3}+y_{3}^{\prime},\ldots)
\\
=\sum_{\substack{i\leq m \\ j\leq k}}\binom{m}{i}
B_{i,j}(y_{1},y_{2},y_{3},\ldots)
B_{m-i,k-j}(y_{1}^{\prime},y_{2}^{\prime},y_{3}^{\prime},\ldots)
\end{multline}
\end{itemize}
\end{proof}

\begin{corollary}
The functions $\alpha_{n}(\hat{s})$ $(n=2,3)$ can be written as the
following power series
\begin{align}
\hat{\alpha}_{2}&=-\frac{1-i\sqrt{3}}{2^{4/3}}
\sum_{m=0}^{\infty}\frac{a_{3 m}}{(3 m)!} w^{3 m/2} \hat{s}^{3
m-1}-\frac{1+i\sqrt{3}}{2^{2/3}}w\sum_{m=0}^{\infty}\frac{b_{3
m}}{(3 m)!} w^{3 m/2} \hat{s}^{3 m+1}\nonumber
\\
\hat{\alpha}_{3}&=-\frac{1+i\sqrt{3}}{2^{4/3}}
\sum_{m=0}^{\infty}\frac{a_{3 m}}{(3 m)!} w^{3 m/2} \hat{s}^{3
m-1}-\frac{1-i\sqrt{3}}{2^{2/3}}w\sum_{m=0}^{\infty}\frac{b_{3
m}}{(3 m)!} w^{3 m/2} \hat{s}^{3 m+1}
\label{eq-corollary1}
\end{align}
with
\begin{align}
a_{3 m}&=\chi^{(3 m)}(0)\nonumber
\\
b_{3
m}&=6^{-\frac{1}{3}}\left\{\delta_{m\,0}+\sum_{k=1}^{m}\sum_{i=0}^{k}\sum_{l=0}^{k-i}(-1)^{k-i-l}
\frac{2^{i-k}(3
m)!}{3^{i}\,i!\left(m-i\right)!l!(k-i-l)!}\right.\nonumber
\\
&\quad\quad \times \left.\left(\!-\frac{1}{3}\right)_{k}
h_{m-i}\left(\frac{l}{2}\right)\right\}
\label{eq-a3mb3m}
\end{align}
which are uniformly convergent for $|\hat{s}|<\rho=(3/4)^{1/3}$.
\label{cor-coeff}
\end{corollary}
\begin{proof}[\bfseries Proof]
These series result from the use of Fa\`{a} di Bruno's formula,
Lemma \ref{lema2} and the Remark in \eqref{eq-alphahats}. They are
uniformly convergent for any compact set inside the convergence
disk.
\end{proof}

After applying Lemmas \ref{lema1} to \ref{lema1d}, the entire
steepest descent path can be segmented in five pieces as shown in
Figure \ref{fig:convergenceregions}. These segments are centred at
$s=0$, $\pm 2/\sqrt{3}$, and $\pm \infty$ and their extremes are
given by $s=-\infty,-\sqrt{5/3},-1,0,1,\sqrt{5/3},\infty$. The
result of integrating \eqref{eq-aiz1} for $|\mbox{arg } z| \leq
2\pi/3$ is given by the following theorem.

\begin{theorem}
\label{th-airyseries}
The Airy function $\mbox{\textup{Ai}}(z)$ for $z\in\mathds{C}$ with
$|\mbox{arg }z|\leq 2\pi/3$, is given by the sum of the following
convergent series expansions
\begin{equation}
\mbox{\textup{Ai}}(z)=I_{\mathcal{L}_{-\infty}}(z)+
I_{\mathcal{L}_{\alpha_{0}^{-}}}(z)+I_{\mathcal{L}_{s}}(z)
+I_{\mathcal{L}_{\alpha_{0}^{+}}}(z)+I_{\mathcal{L}_{\infty}}(z)
\label{eq-aiIexp}
\end{equation}
with
\begin{align}
I_{\mathcal{L}_{\alpha_{s}}}(z)&=\frac{1}{2\pi
z^{1/4}}e^{-2/3\,z^{3/2}}\sum_{n=0}^{\infty}(-1)^{n}
\frac{\Gamma(3n+\frac{1}{2})}{3^{2n}(2n)!}
\frac{\gamma(n+\frac{1}{2},s_{0}^{2}\,|z|^{3/2})}{\Gamma(n+\frac{1}{2})}
z^{-3/2 n}\nonumber
\\
I_{\mathcal{L}_{-\infty}}(z)+&
I_{\mathcal{L}_{\infty}}(z)=-\frac{\sqrt{3}}{2^{2/3}\pi}e^{-2/3\,z^{3/2}}
\nonumber
\\
&\times \sum_{m=0}^{\infty}\left\{\frac{1}{2^{2/3}}\frac{a_{3m}}{(3
m)!} \big(m-\frac{1}{3}\big)\, \Gamma\left(\frac{1}{3}-m,s_{1}^2\,
|z|^{3/2}\right) z^{3/2 m}\right.\nonumber
\\
&\qquad\left.-\frac{b_{3m}}{(3 m)!}\big(m+\frac{1}{3}\big)\,
\Gamma\left(-\frac{1}{3}-m,s_{1}^2\, |z|^{3/2}\right)\,z^{3/2
m+1}\right\}
\label{eq-th1}
\end{align}
and
\begin{align}
I_{\mathcal{L}_{\alpha_{0}^{\pm}}}(z)&=\pm\frac{|z|^{1/2}}{2\pi
i}e^{-2/3\,z^{3/2}}e^{-4/3\,|z|^{3/2}}\sum_{n=0}^{\infty}\frac{(-1)^{n-1}}{(n-1)!}
3^{n} |z|^{-3 n/2} \nonumber
\\
&\times \left[\Gamma(n,\mp t_{0}|z|^{3/2})-\Gamma(n,\pm
t_{0}|z|^{3/2})\right](\alpha_{0}^{\pm}-\alpha_{0}^{\mp})^{-2 n+1}
(\alpha_{0}^{\pm}-\alpha_{0})^{-n}\nonumber
\\
&\times\sum_{k=0}^{n-1}\binom{n-1}{k}\frac{\Gamma(2
n-1-k)}{\Gamma(n)}\frac{\Gamma(n+k)}{\Gamma(n)}\left(\frac{\alpha_{0}^{\pm}-\alpha_{0}^{\mp}}{\alpha_{0}^{\pm}-\alpha_{0}}\right)^{k}
\label{eq-th1b}
\end{align}
for $w\neq e^{\, i 2\pi/3}$ and with $\alpha_{0}^{\pm}$ as given in
Lemma \ref{lema1b}, or, with $s=\sign\{\arg z\}$,
\begin{align}
I_{\mathcal{L}_{\alpha_{0}^{+s}}}(z)&=\frac{|z|^{1/2}}{4\pi
i}e^{-2/3\,z^{3/2}}e^{-4/3\,|z|^{3/2}}\sum_{n=0}^{\infty}\frac{(-1)^{2
n-1}}{(n-1)!} 3^{n/2} \frac{\Gamma(\frac{3
n}{2}-1)}{\Gamma(\frac{n}{2})}\nonumber
\\
&\times (\alpha_{0}^{+s}-\alpha_{0}^{-s})^{-3 n/2+1} |z|^{-3 n/4}
\left[\Gamma(\frac{n}{2},\unaryminus s\,
t_{0}|z|^{3/2})\mid_{b}-\Gamma(\frac{n}{2},s\,t_{0}|z|^{3/2})\right]\nonumber
\\
I_{\mathcal{L}_{\alpha_{0}^{-s}}}(z)&=\frac{|z|^{1/2}}{2\pi
i}e^{-2/3\,z^{3/2}}e^{-4/3\,|z|^{3/2}}\sum_{n=0}^{\infty}\frac{(-1)^{n-1}}{(n-1)!}
3^{n} \frac{\Gamma(3 n-1)}{\Gamma(2 n)}\nonumber
\\
&\times (\alpha_{0}^{-s}-\alpha_{0}^{+s})^{-3 n+1} |z|^{-3 n/2}
\left[\Gamma(n,s\,t_{0}|z|^{3/2})-\Gamma(n,\unaryminus s\,
t_{0}|z|^{3/2})\right]
\label{eq-th1c}
\end{align}
for $w= e^{\, i 2\pi/3}$ and with $\alpha_{0}^{\pm}$ as given in
Lemma \ref{lema1c}, where $\mid_{b}$ is the branch in the case of
$\Gamma(\frac{n}{2},\unaryminus t_{0}|z|^{3/2})$, which implies to
take the complex conjugate for the case of $\arg z < 0$. The values
of $s_{0}$, $s_{1}$ and $t_{0}$ are given by
\begin{align}
s_{0}&=1\nonumber
\\
s_{1}&=\sqrt{\frac{5}{3}}\nonumber
\\%
t_{0}&=\frac{1}{3}.
\label{eq-integrationrange}
\end{align}
The series for $I_{\mathcal{L}_{\alpha_{0}^{\pm}}}(z)$ in
\eqref{eq-th1b} is slowly convergent when $|\arg z| <
\frac{2}{3}\arctan(-7/8)$ as a consequence of the observations in
Lemma \ref{lema1d} and can be replaced by
\begin{align}
I_{\mathcal{L}_{\alpha_{0}^{+s}}}(z)&=\frac{|z|^{1/2}}{4\pi
i}e^{2/3\,z^{3/2}}\sum_{n=0}^{\infty}\frac{(-1)^{2 n-1}}{(n-1)!}
3^{n/2} \frac{\Gamma(\frac{3
n}{2}-1)}{\Gamma(\frac{n}{2})}(\alpha_{0}^{+s}-\alpha_{0}^{-s})^{-3
n/2+1} |z|^{-3 n/4}\nonumber
\\
&\times \left[\Gamma(\frac{n}{2},\unaryminus s
(t_{0}+\hat{t})|z|^{3/2})-\Gamma(\frac{n}{2},s(t_{0}-\hat{t})|z|^{3/2})\right]
\label{eq-th1d}
\end{align}
with $\alpha_{0}^{\pm}$ and $\hat{t}=-4/3\,(1+w^{3/2})$ as given in
Lemma \ref{lema1d}.

Functions $\gamma(\nu,x)$ and $\Gamma(\nu,x)$ are the lower and
upper incomplete gamma functions, respectively.

For the case where $|\mbox{\textup{arg} }z|> 2\pi/3$, the following
property is used,
\begin{equation}
\mbox{\textup{Ai}}(z)=-e^{i 2\pi/3}\, \mbox{\textup{Ai}}(e^{i
2\pi/3}\, z)-e^{-i 2\pi/3}\, \mbox{\textup{Ai}}(e^{-i 2\pi/3}\, z).
\label{eq-Aipw}
\end{equation}
Similarly, the Airy function of the second kind,
$\mbox{\textup{Bi}}(z)$ can be computed from the series expressions
for $\mbox{\textup{Ai}}(z)$, with the use of the property
\eqref{eq-biai}
\begin{equation}
\mbox{\textup{Bi}}(z)=e^{i\,\pi/6}\mbox{\textup{Ai}}(z\,e^{i\,2\pi/3})+e^{-i\,\pi/6}\mbox{\textup{Ai}}(z\,e^{-i\,2\pi/3}).
\label{eq-biai2}
\end{equation}
\end{theorem}
\begin{proof}[\bfseries Proof]
The integration in \eqref{eq-ai2} can be done through the five
sections defined by the expansions around points $s=0, \pm\infty$
and $t=0$ or $t=4/3\,(1+w^{3/2})$, as described in Lemmas
\ref{lema1} to \ref{lema1d} and Corollary \ref{cor-coeff}. The
choice of the limiting points between sections is made to guarantee
uniform and fast convergence of the series under integration and
also to maintain symmetric integration bounds. In particular, we can
see the case of integrating through the segments containing
$\alpha_{0}^{+}$ y $\alpha_{0}^{-}$, as shown in Figure
\ref{fig:convergenceregions} (case of $\varphi=\arg z > 0$): the
expansion around $\alpha_{0}^{+}$ has a radius of convergence,
$\rho=m$, that is more restrictive than the one around
$\alpha_{0}^{-}$, which is $\rho=\frac{4}{3}$. As $\varphi$
approaches $\frac{2}{3} \pi$, the radius becomes zero. At
$\varphi=\frac{2}{3}\arctan(-\frac{7}{8})$, we have $m=\frac{2}{3}$.
Therefore, the integration in $t\in [-\frac{1}{3},\frac{1}{3}]$ uses
only half the convergence radius. For values
$\frac{2}{3}\arctan(-\frac{7}{8}) < \varphi < \frac{2}{3} \pi$, we
use the expansion around $t=-\frac{4}{3}\,(1+w^{3/2})$, which has a
convergence disk with $\rho=\frac{4}{3}$ independently of $\varphi$.
However, this expansion does not properly accommodate the
integration segment, $t\in [-\frac{1}{3},\frac{1}{3}]$, for
$\varphi<\frac{2}{3}\arctan(-\frac{7}{8})$: either it brings this
segment too close to the convergence disk boundary or out of it. On
the other hand, the expansion around $\alpha_{0}^{-}$ can include in
similar conditions a bigger segment than $t\in
[-\frac{1}{3},\frac{1}{3}]$, such as $t\in
[-\frac{2}{3},\frac{2}{3}]$. The use of such a segment would produce
an asymmetry in the integration bounds for the Gamma function that
are to be avoided when possible, and, therefore, $t\in
[-\frac{1}{3},\frac{1}{3}]$ is again chosen for integrating the
corresponding expansion, as shown in Figure
\ref{fig:convergenceregions}. 

Therefore, starting with the application of Lemma \ref{lema1} in
equation \eqref{eq-aiz1} and with the integration limits given by
$\unaryminus s_{0}$ and $s_{0}$, we obtain
\begin{align}
I_{\mathcal{L}_{s}}(z)&=\frac{1}{2\pi\,i}\int_{\mathcal{L}_{s}}du\,e^{x
u-1/3 u^{3}}=\frac{|z|^{1/2}}{2\pi
i}\int_{\mathcal{L}_{s}}d\alpha\,e^{|z|^{3/2}( w\alpha-1/3
\alpha^{3})}\nonumber
\\
&=\frac{1}{2\pi z^{1/4}}e^{-2/3\,z^{3/2}}\sum_{n=0}^{\infty}(-1)^{n}
\frac{\Gamma(3n+\frac{1}{2})}{3^{2n}(2n)!}
\frac{\gamma(n+\frac{1}{2},s_{0}^{2}|z|^{3/2})}{\Gamma(n+\frac{1}{2})}
z^{-3/2 n}.
\label{eq-I1c}
\end{align}
Function $\gamma(s,t)$ is the lower incomplete gamma function. In
effect, this comes from
\begin{align}
\int_{-a}^{a}s^{2n+1}\exp[-\sigma\,s^{2}]\,ds&= 0\nonumber
\\
\int_{-a}^{a}s^{2n}\exp[-\sigma\,s^{2}]\,ds&=
\frac{\gamma(n+\frac{1}{2},a^{2} \sigma)}{\sigma^{n+1/2}}
\label{eq-gammaintegrals3}
\end{align}
where $n\in\mathds{N}$, $\sigma\in\mathds{R}^{+}$ and $a>0$.

As for the integrals in $\mathcal{L}_{-\infty}$ and
$\mathcal{L}_{\infty}$, they are computed here through the change of
variable given in \eqref{eq-alphau}. This makes it possible to
compute an expansion of the solutions of $\alpha=\alpha(\hat{s})$
for $|\hat{s}|<(4/3)^{1/3}$ as corresponding to
$s\in(-\infty,-2/\sqrt{3})$ and $s\in(2/\sqrt{3},\infty)$. As stated
in Remark \ref{rem2}, the solutions $\alpha_{2}(\hat{s})$ and
$\alpha_{3}(\hat{s})$ correspond to the curves
$\mathcal{L}_{-\infty}$ and $\mathcal{L}_{\infty}$, respectively.
Corollary \ref{cor-coeff} produces
\begin{align}
\left.\Phi(s)\right|_{\mathcal{L}_{-\infty}}&=-\frac{1+i\sqrt{3}}{2^{4/3}}\sum_{m=0}^{\infty}
\frac{a_{3m}}{(3m)!} w^{3 m/2}
(2m-\frac{2}{3})\,s^{-2m-\frac{1}{3}}\nonumber
\\
&-\frac{1-i\sqrt{3}}{2^{2/3}} w \sum_{m=0}^{\infty}
\frac{b_{3m}}{(3m)!} w^{3 m/2}
(2m+\frac{2}{3})\,s^{-2m-\frac{5}{3}}\nonumber
\\
\left.\Phi(s)\right|_{\mathcal{L}_{\infty}}&=\frac{1-i\sqrt{3}}{2^{4/3}}\sum_{m=0}^{\infty}
\frac{a_{3m}}{(3m)!} w^{3 m/2}
(2m-\frac{2}{3})\,s^{-2m-\frac{1}{3}}\nonumber
\\
&+\frac{1+i\sqrt{3}}{2^{2/3}} w \sum_{m=0}^{\infty}
\frac{b_{3m}}{(3m)!} w^{3 m/2}
(2m+\frac{2}{3})\,s^{-2m-\frac{5}{3}}.
\label{eq-I2c}
\end{align}
Therefore, the contribution to the Airy's integral by
$\mathcal{L}_{-\infty}$ and $\mathcal{L}_{\infty}$ is
\begin{align}
I_{\mathcal{L}_{-\infty}}(z)+I_{\mathcal{L}_{\infty}}(z)&=-\frac{\sqrt{3}}{2^{2/3}\pi}e^{-2/3\,z^{3/2}}
\nonumber
\\
&\times \sum_{m=0}^{\infty}\left\{\frac{1}{2^{2/3}}\frac{a_{3m}}{(3
n)!} \big(m-\frac{1}{3}\big)\,
\Gamma\left(\frac{1}{3}-m,\frac{4}{3}|z|^{3/2}\right) z^{3/2
m}\right.\nonumber
\\
&\qquad\left.-\frac{b_{3m}}{(3 m)!}\big(m+\frac{1}{3}\big)\,
\Gamma\left(-\frac{1}{3}-m,\frac{4}{3}|z|^{3/2}\right)\,z^{3/2
m+1}\right\}
\label{eq-I3c}
\end{align}
since
\begin{equation}
\int_{a}^{\infty}s^{-2n-k/3}\exp[-\sigma\,s^{2}]\,ds=
\frac{\Gamma(\frac{1}{2}-\frac{k}{6}-n,a^{2}
\sigma)}{2\,\sigma^{\frac{1}{2}-\frac{k}{6}-n}}
\label{eq-gammaintegrals4}
\end{equation}
with $n,k\in\mathds{N}$, $\sigma\in\mathds{R}^{+}$ and $a>0$.

In a similar manner, Lemmas \ref{lema1b}, \ref{lema1c} and
\ref{lema1d} produce equations \eqref{eq-th1b}, \eqref{eq-th1c} and
\eqref{eq-th1d} by substituting the corresponding $\alpha$-series
into equation \eqref{eq-phisai} and by using
\begin{align}
\int_{-t_{0}}^{t_{0}}
t^{n-1}\exp[-\sigma\,t]\,dt&=\sigma^{-n}\left[\Gamma(n,\unaryminus
t_{0}\sigma)-\Gamma(n,t_{0}\sigma)\right]\nonumber
\\
\int_{-t_{0}}^{t_{0}}
t^{n/2-1}\exp[-\sigma\,t]\,dt&=\sigma^{-n/2}\left[\Gamma(\frac{n}{2},\unaryminus
t_{0}\sigma)\mid_{b}-\Gamma(\frac{n}{2},t_{0}\sigma)\right]\nonumber
\\
\int_{-t_{0}}^{t_{0}}
\left(t-\hat{t}\right)^{n/2-1}\exp[-\sigma\,t]\,dt&=\sigma^{-n/2}\,e^{-\hat{t}\sigma}\left[\Gamma(\frac{n}{2},\unaryminus
(t_{0}+\hat{t})\sigma)-\Gamma(\frac{n}{2},(t_{0}-\hat{t})\sigma)\right]
\end{align}
with $\mid_{b}$ being the branch of $t^{n/2}$, which implies to take
the complex conjugate for the case of $\arg z < 0$ as results from
Lemma \ref{lema1c}.

The five-section decomposition is valid for $|\mbox{arg }z|\leq
2\pi/3$ as has just been described, and it is illustrated in Figures
\ref{fig:intpaths}(a) to \ref{fig:intpaths}(f), except in Figure
\ref{fig:intpaths}(d). The latter shows the case of $|\mbox{arg }z|
> 2\pi/3$, for which the integration path splits in two separate
paths. Integration through the two paths would require a different
set of integrals. However, the well known property of the Airy
function given by equation \eqref{eq-Aipw} allows to identify the
mapping between the lower path in Figure \ref{fig:intpaths}(d) with
the first term in \eqref{eq-Aipw} and the upper path with its second
term\footnote{This can be checked by realizing that the
$\alpha$-solutions of the steepest descent paths for the saddle
point $\alpha_{s}=w^{1/2}$ are given by analogous solutions to
\eqref{eq-alphahats} but with
\begin{align}
\bar{\chi}(t)&=[3-2\,t^{3}+\sqrt{3}\, \bar{\eta}(t)]^{1/3}\nonumber
\\
\bar{\eta}(t)&= \sqrt{3-4\,t^{3}}
\end{align}
replacing $\chi(t)$ and $\eta(t)$.}. A similar correspondence is
found for the case of $\mbox{Bi}(z)$, where we have \eqref{eq-biai2}
to relate it to our results for $\mbox{Ai}(z)$~\footnote{In fact, if
the change of variable applied to \eqref{eq-ai1} to become
\eqref{eq-ai4} is used in \eqref{eq-ai3} or \eqref{eq-bi1}, it is
easy to see that either \eqref{eq-Aipw} or \eqref{eq-biai2} are
produced.}.
\end{proof}

The splitting of the integration path for $|\mbox{arg }z| > 2\pi/3$
is related to the discontinuity of the Stokes' multipliers as
described in \cite{boyd:99}. The transition from Figure 2c to Figure
2d illustrates such a discontinuity in a visual way. As also stated
in \cite{boyd:99}, the Stokes phenomenon can be analyzed from a
topological point of view: the steepest descent contour moves from
including only one saddle point to both of them at $|\mbox{arg }z| =
2\pi/3$, and then to split into two different paths, now homotopic
to $\mathcal{L}_{31}$ and $\mathcal{L}_{12}$. Equation
\eqref{eq-Aipw} reflects this splitting, and $-(e^{i 2\pi/3}\,
z)^{3/2}=z^{3/2}s$ allows us to recognize the presence of a positive
exponential after the splitting. However, Theorem
\ref{th-airyseries} goes beyond \cite{boyd:99} in this sense and
shows the oscillatory behavior of the Airy function for the
anti-Stokes line for small, non-asymptotic values of $|z|$.

Theorem \ref{th-airyseries} produces a Hadamard expansion of a type
that is more general than those presented
in~\cite{paris:04,parisb:04}. It also reveals that the method
originally devised by Paris in them inherently contains a degree of
complexity when dealing with the branch points of the involved
complex algebraic curves that has not been studied before.

\section{Numerical analysis of the new series expansion and its comparison to Maclaurin and the asymptotic
expansions}
\label{sec-numan}

\begin{figure}

\begin{tabular}{c c}
\resizebox{6cm}{!}{\includegraphics{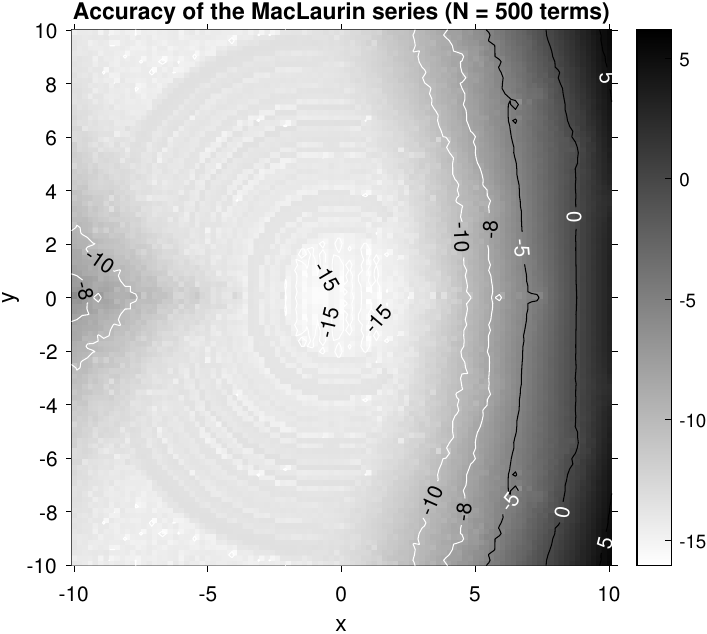}}
&
\resizebox{6cm}{!}{\includegraphics{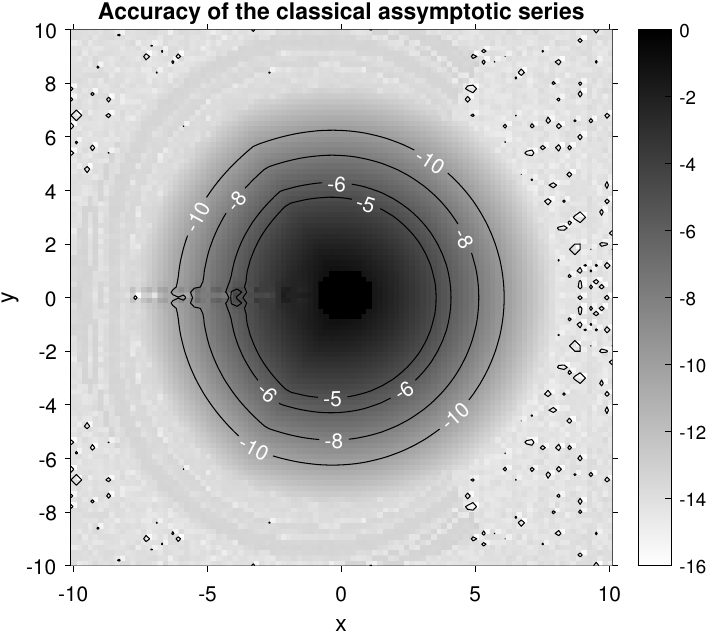}}
\vspace{-3pt} \\
{\scriptsize(a)} & {\scriptsize(b)} \\
\resizebox{6cm}{!}{\includegraphics{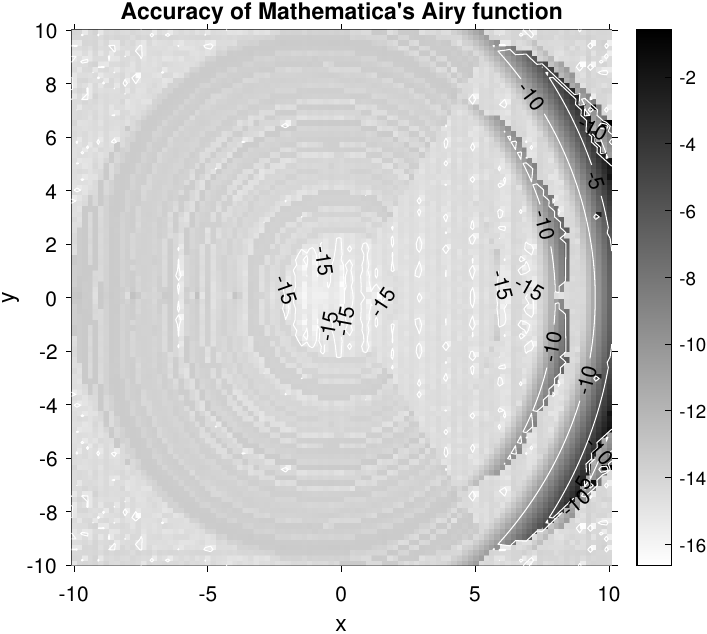}}
&
\resizebox{6cm}{!}{\includegraphics{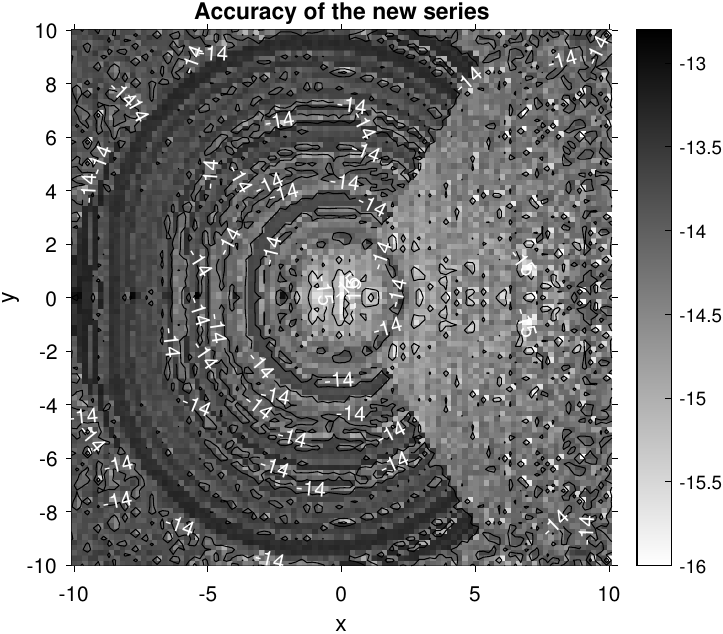}}
\vspace{-3pt} \\
{\scriptsize(c)} & {\scriptsize(d)}
\end{tabular}
\caption{The accuracy of the different expansions as defined in
\eqref{eq-accur} is shown in a), b) and d). The Maclaurin series is
computed with its first 500 terms. In c), the accuracy of
\textsc{Mathematica}$^{\circledR}$ with regard to
\textsc{Matlab}$^{\circledR}$ is shown.}
\label{fig:accur1}
\end{figure}

In this section, the new expansions of Theorem \ref{th-airyseries}
are analyzed numerically for $\{z=x + i y: -10\le x \le 10, -10\le y
\le 10\}$. This domain contains both the unit circle, where the
Maclaurin series is expected to perform best, and large enough
values of $|z|$ where the asymptotic series is applicable.  The
series of the new expansion are truncated when the difference
between two consecutive terms is less than the machine epsilon,
which, for the double precision floating-point format in use, is of
$2^{-53}$. 

The accuracy of any expansion method truncated at $n=N$ with respect
to the benchmark is defined as
\begin{equation}
\mbox{Accuracy}=\log_{10}\left|\frac{\left[\mbox{Ai}(z)\right]_{N}^{\scriptsize\mbox{method}}-\left[\mbox{Ai}(z)\right]^{\scriptsize\mbox{benchmark}}}
{\left[\mbox{Ai}(z)\right]^{\scriptsize\mbox{benchmark}}}\right|.
\label{eq-accur}
\end{equation}
As a benchmark, we use the routine for the integral of
$\mbox{Ai}(z)$ provided by \textsc{Matlab}$^{\circledR}$, which is
also fully compliant with both Scipy and Maxima for the
aforementioned domain. Figures \ref{fig:accur1}(a) to
\ref{fig:accur1}(d) show the accuracy of the Maclaurin series ($N =
500$), the classical asymptotic expansion,
\textsc{Mathematica}$^{\circledR}$'s Airy function and the new set
of expansions. The Maclaurin series has a bad performance for
$\mbox{Re}(z)> 5.5$, whereas the classical asymptotic series shows
this level of inaccuracy for $|z|\le 3.5$. The asymptotic series is
truncated when the error given by the first neglected term starts
rising. The values for the function given by \textsc{Mathematica
9.0}$^{\circledR}$ are also compared with the benchmark and some
differences are found, due to a different implementation. The new
expansion has an accuracy better than $10^{-12.78}$ for the complex
plane zone under study.

The number of terms needed for each one of the series expansions of
Theorem \ref{th-airyseries} in the numerical test of this section
are shown in Figure \ref{fig:histogramterms}. The convergence of the
expansions for $I_{\mathcal{L}_{s}}$,
$I_{\mathcal{L}_{\alpha_{0}^{+}}}$ and
$I_{\mathcal{L}_{\alpha_{0}^{-}}}$, as defined above, occurs for a
statistical mode of 20 terms. The largest number of terms is
required in the case of the computation of the sum
$I_{\mathcal{L}_{-\infty}}+I_{\mathcal{L}_{\infty}}$. The
coefficients $a_{n}$ and $b_{n}$ in equation \eqref{eq-I3c} can
computed from equation \eqref{eq-a3mb3m} once and for all values of
the variable $z$.

\begin{figure}

\resizebox{10cm}{!}{\includegraphics{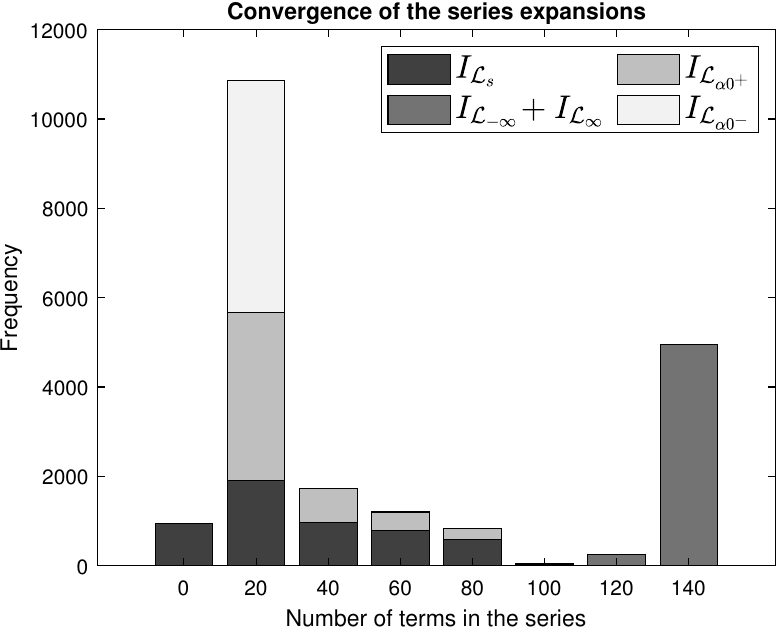}}
\caption{Histogram of the number of terms that are required by each
one of the series expansiones in equation \eqref{eq-aiIexp} to reach
convergence within the machine precision for double precision
floating-point numbers.}
\label{fig:histogramterms}
\end{figure}

\section{Conclusions}
A new convergent series expansion has been obtained for the Airy
function $\mbox{Ai}(z)$ of complex argument, and also for
$\mbox{Bi}(z)$ as a consequence of equation \eqref{eq-biai2}. This
new expansion includes incomplete Gamma functions of $|z|$ in its
five series. In this respect, they are a type of Hadamard expansions
as defined in~\cite{paris:04,parisb:04}. However, the new series are
different to the ones introduced in these references in the sense
that they include upper incomplete Gamma functions, and also in the
manner of selecting the points around which the series are computed.
The developments that are performed in the current analysis show the
complexity of the original idea of segmenting the steepest-descent
path in pieces where uniform convergence occurs and where the
integration is done avoiding an extreme proximity to the convergence
disk boundaries. The use of Puiseux series in this context for
expansions around the branch points is an additional new ingredient
of the treatment done here, as it is the fact that only a small and
finite number of segments is necessary for a full and exact spliting
of the integration path. These recourses had not been studied
in~\cite{paris:04,parisb:04}. The new approach could be extended to
other Laplace-type integrals by using the same procedure.

Theorem \ref{th-airyseries} also provides a clear picture of the
Stokes phenomenon present in the classical asymptotic series. It
shows the impact of the splitting of the steepest descent path in
the series of equation \eqref{eq-th1}, and the correspondence with
the well known property given in \eqref{eq-Aipw}. 

The convergence of the new expansions has also been studied in
section \ref{sec-numan}. It produces a level of accuracy that
improves the performance of the asymptotic expansion and equals that
of the Maclaurin series. Therefore, the new expansion is a candidate
to replace under a single framework the computation algorithm for
the Airy functions, which is currently based on the combined use of
the Maclaurin series, the asymptotic expansion and usually a
Gauss-Laguerre quadrature method for the corresponding integral
where the other two series are
inadequate~\cite{gilseguratemme:2007}.

\end{document}